\newtheorem{theorem}{Theorem}[section]
\newtheorem{lemma}[theorem]{Lemma}
\newtheorem{proposition}[theorem]{Proposition}
\newtheorem{corollary}[theorem]{Corollary}
\theoremstyle{definition}
\newtheorem{definition}[theorem]{Definition}
\newcommand{\C}{\mathbb C}
\newcommand{\mC}{{\mathbb C}}
\newcommand{\mE}{{\mathbb E}}
\newcommand{\mQ}{\mathbb Q}
\newcommand{\mZ}{{\mathbb Z}}
\newcommand{\Gg}{\gamma}
\newcommand{\bs}{\sigma}
\newcommand{\kk}{\kappa}
\newcommand{\mcG}{\mathcal G}
\newcommand{\mcH}{\mathcal H}
\newcommand{\mcK}{\mathcal K}
\newcommand{\mcR}{\mathcal R}
\newcommand{\mcV}{\mathcal V}
\newcommand{\mc}{\mathcal}
\newcommand{\QQ}{\mathbb{Q}}
\newcommand{\ZZ}{\mathbb{Z}}
\newcommand{\RR}{\mathbb{R}}
\newcommand{\ti}{\tilde}
\newcommand{\Irr}{\mathrm{Irr}}
\newcommand{\Rep}{\mathrm{Rep}}
\newcommand{\Aut}{\mathrm{Aut}}
\newcommand{\Hom}{\mathrm{Hom}}
\newcommand{\Gal}{\mathrm{Gal}}
\newcommand{\OQ}{{\overline{\mathbb Q}}}
\newcommand{\nr}{\mathrm{nr}}
\newcommand{\der}{\mathrm{der}}
\newcommand\fdeg{\mathrm{fdeg}}
\begin{document}

\title[Rationality properties of representations of $p$-adic groups]
{Rationality properties of complex representations of reductive $p$-adic groups}
\date{\today}
\subjclass[2010]{20G25, 22E50}
\author{David Kazhdan}
\address{Einstein Institute of Mathematics\\
The Hebrew University of Jerusalem\\
Givat Ram, Jerusalem, 9190401, Israel}
\email{kazhdan@math.huji.ac.il}
\author{Maarten Solleveld}
\address{IMAPP, Radboud Universiteit Nijmegen, Heyendaalseweg 135,
6525AJ Nij\-megen, the Netherlands}
\email{m.solleveld@science.ru.nl}
\author{Yakov Varshavsky}
\address{Einstein Institute of Mathematics\\
The Hebrew University of Jerusalem\\
Givat Ram, Jerusalem, 9190401, Israel}
\email{yakov.varshavsky@mail.huji.ac.il}
\maketitle


\begin{abstract}
For a reductive group $G$ over a non-archimedean local field, we compare smooth 
representations over $\C$ with smooth representations over $\OQ$. We 
show that an elliptic $G$-representation (in the sense of Arthur) can be realized over 
$\OQ$ if and only if its central character takes values in $\OQ$. That
applies in particular to all essentially square-integrable $G$-representations.

We also study the action of the automorphism group of $\C/\mQ$ on complex 
$G$-representations. We prove that the sets of essentially square-integrable 
representations and of elliptic representations are stable under Gal$(\C/\mQ)$. 
\end{abstract}
\vspace{2mm}

\tableofcontents

\section{Introduction}
The paper \cite{FS} develops the theory of smooth representations of a reductive group $G$ over a
non-archimedean local field $F$ on $\OQ _l$-vector spaces. Choosing an isomorphism  between
$\mC$ and $\OQ _l$, one obtains results about complex representations of $G$. It is natural to
ask about independence of the choice of such an isomorphism. Since any two isomorphisms  between
$\mC$ and $\OQ _l$ differ by a composition with an element of the automorphism group
Gal$(\mC /\mQ)$, this question is equivalent to understanding of
Gal$(\mC /\mQ)$ on the category of smooth complex representations of $G$. In this paper we show
that a large part of the familiar structure of this category is invariant under Gal$(\mC /\mQ)$.

This can be considered to be a counterpart of the recent result of Scholze
\cite{Sc} asserting that the Fargues–-Scholze correspondence can be done motivically
and thus is independent of the $\ell$ in $\OQ_\ell$.

\subsection{Notations} 
\begin{enumerate}
\item $\OQ$ is the algebraic closure of $\mQ$ in $\C$.

\item  $F$ is a non-archimedean local field whose residue field has characte\-ristic
$p$ and cardinality $q_F$.

\item $G$ is the group of $F$-points of connected reductive $F$-group $\mcG$
and $Z(G)$ is the centre of $G$.




\item A representation of $G$ on a complex vector space will be identified with a representation of
the group algebra $\C G$.

\item $\Irr (\mC G)$ is the set of isomorphism classes of smooth complex irreducible
representations of $G$.

\item For $\pi \in \Irr (\mC G) $ we denote by $cc(\pi) : Z(G)\to \C ^\times $ the character
 such that $\pi (z)= cc(\pi)(z)Id, z\in Z(G)$.

\item $\Irr' (\mC G) = \{ \pi \in \Irr (\mC G) \mid \text{Im} ( cc(\pi)) \subset  \OQ^\times \}$.

\item By default, our representations will have complex coefficients. So if it says $G$ or $\C G$,
then the coefficients are in $\C$, while $\OQ G$ means that the coefficients of the
representations must be in $\OQ$.

\item $\mcR_G$ is the category of smooth finite length complex representations of the group
$G$. The variation $\mcR'_G$ is given by restricting to representations all whose
irreducible subquotients lie in $\Irr' (\mC G) $. When $G$ is semisimple,
$\mcR'_G$ coincides with $\mcR_G$.

\item $\Irr (\OQ G)$ is the set of equivalence classes
of smooth irreducible $\OQ$-representations of $G$.

\item $\mcR _{\OQ G}$ is the category of smooth finite length  $ \OQ $-representations of $G$.

\item  $ K( \mcR _G), K(\mcR'_G) $ and $ K( \mcR _{\OQ G}) $ are the $K$-groups of the categories
$ \mcR_G, \mcR'_G$ and  $\mcR _{\OQ G}$.
\item   $\mcK _G := K( \mcR _G) \otimes _\mZ \OQ ,\;  \mcK' _G := K( \mcR' _G) \otimes _\mZ \OQ ,\;
\mcK _{\OQ G} := K( \mcR_{\OQ G}) \otimes _\mZ \OQ $.



\item $\mcK_{G,\text{temp}}$ is the subspace of $\mcK_G$ spanned by the tempered
$G$-repre\-sen\-ta\-tions. 

\item The definition of tempered \cite{Wal} applies also to
$\OQ G$-representations, so we can define $\mcK_{\OQ G,\text{temp}} \subset \mcK_{\OQ G}$ 
and $\mcK'_{G,\text{temp}} \subset \mcK'_G$ analogously.

\item J. Arthur (see \cite{Ar,Herb}) defined a $\OQ$-linear subspace
$\mcK_{G,\text{temp}} (ar) \subset \mcK _{G,\text{temp}} $ of elliptic tempered characters.



\item Using only representations from $\mcR'_G$ in (16) gives $\mcK'_{G,\text{temp}} (ar)$.

\item J. Arthur also defined a notion of elliptic tempered virtual $G$-repre\-sen\-tations. 
These span $\mcK_{G,\text{temp}} (ar)$.
\end{enumerate}

\subsection{Results} \

The first general goal of this paper is a comparison of $\C G$-represen\-ta\-tions with
$\OQ G$-represen\-ta\-tions. It is known from \cite{Vig} that extension of scalars
from $\OQ$ to $\C$ preserves irreducibility of $\OQ G$-representations.
Of course not every irreducible $\C G$-representation comes from a 
$\OQ G$-representation. An obvious necessary condition is that its central cha\-rac\-ter
takes values in $\OQ^\times$.

But this condition does not suffice for a representation to be
defined over $\OQ$, since there are representations parabolically 
induced from representations $\bs$ of proper Levi subgroup $L$ such that the 
$Z(L)$-character of $\sigma$ does not take values in $\OQ^\times$. Looking for a 
description of a subspace of $\mcR_G$ coming from $\OQ$-representations we restrict 
our attention to  the part of the representation theory of $G$ that is "orthogonal" 
to the parabolically induced representations. To do that well, we pass from 
$\mcR_G$ to virtual $\C G$-representations, and more precisely to $\mcK_G$. Then the 
central character condition is accounted for by considering $\mcK'_G$ instead
of $\mcK_G$.

Arthur \cite{Ar} has shown that the subgroup of $\mcK_{G,\text{temp}}$ spanned by 
representations induced from proper parabolic subgroups of $G$ admits a complement 
$\mcK_{G,\text{temp}}(ar)$ of tempered elliptic representations which constitute 
the discrete, non-induced part of $\mcK_{G,\text{temp}}$.

The functor $\otimes_\OQ \C$ provides an embedding of $\mcK _{\OQ G}$ in 
$\mcK _G $. That enables 
us to define $\mcK_{\OQ G,\text{temp}} (ar) \subset \mcK_{\OQ G}$ as the $\OQ$-linear 
subspace of tempered elliptic representations obtained from $\C G$-representations
defined over $\OQ$. In Definitions (14)--(18) above the temperedness condition can be
omitted, which leads to natural "nontempered" analogues of these spaces. In 
parti\-cular there are spaces of elliptic (virtual) representations $\mcK_G (ar) 
\subset \mcK_G$ and $\mcK_{\OQ G} (ar) \subset \mcK_{\OQ G}$. See Paragraph 
\ref{par:7.2} for details. For all these spaces we can impose the additional 
condition that the central characters of the underlying representations must take 
values in $\OQ^\times$, which will be indicated by ${}'$.

\begin{theorem} \textup{(see Corollary \ref{cor:5.7})} 
\label{T2}
\begin{enumerate}
\item Every complex elliptic representation in $\mcR'_G$ can be realized over $\OQ$.
In particular every essentially square-integrable $\C G$-representation whose central
character takes values in $\OQ^\times$ can be realized over $\OQ$.
\item $\mcK'_{G,\text{temp}} (ar)$ equals $\mcK_{\OQ G,\text{temp}}(ar)$ and
$\mcK'_G (ar)$ equals $\mcK_{\OQ G} (ar)$.
\end{enumerate}
\end{theorem}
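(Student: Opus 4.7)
The plan is to deduce Theorem~\ref{T2} from Theorem~\ref{T1} by showing that every complex elliptic representation in $\mcR'_G$ is fixed by $\Gal(\C/\OQ)$. For part (1), let $\pi$ be an irreducible essentially square-integrable $\C G$-representation with $cc(\pi)\in\OQ^\times$. For any $\sigma\in\Gal(\C/\OQ)$, the twisted representation $\sigma\pi$ is again essentially square-integrable (the $L^2$ condition on matrix coefficients is preserved by $\sigma$) and has the same central character (since $\sigma$ fixes $\OQ$ pointwise). By Theorem~\ref{T1}, it therefore suffices to establish $\sigma\pi\cong\pi$, for then $\pi$ descends to $\OQ$.

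To prove $\sigma\pi\cong\pi$, I would invoke Arthur's orthonormality of essentially square-integrable characters in the elliptic inner product $\langle\cdot,\cdot\rangle_\el$: distinct essentially square-integrable representations are orthogonal, and each has norm one. Since this pairing is defined by integrals of normalized orbital integrals over elliptic conjugacy classes, it takes $\mQ$-rational values and is automatically $\Gal(\C/\OQ)$-equivariant, so the Galois action permutes this orthonormal family. Combined with rigidity of the inertial class of the cuspidal support of $\pi$ (which is pinned down by the central character together with Bernstein theory up to a finite ambiguity), a counting argument should force the orbit of $\pi$ to be a singleton. For a general (possibly non-tempered) elliptic representation in $\mcR'_G$, I would reduce to the square-integrable case via Arthur's $R$-group formalism together with the Langlands classification: elliptic representations are expressed in terms of essentially square-integrable representations of Levi subgroups together with characters of finite $R$-groups, and the condition $cc(\pi)\in\OQ^\times$ propagates through this reduction.

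For part~(2), once part~(1) is in place the equalities are formal. The containment $\mcK_{\OQ G,\temp}(ar)\subset\mcK'_{G,\temp}(ar)$ is immediate, because tempered and elliptic properties are preserved by $\otimes_\OQ\C$ and the central character of any $\OQ G$-representation automatically takes values in $\OQ^\times$. For the reverse containment, generators of $\mcK'_{G,\temp}(ar)$ are by construction essentially square-integrable representations belonging to $\mcR'_G$, and by part~(1) these descend to $\OQ$. The non-tempered identity $\mcK'_G(ar)=\mcK_{\OQ G}(ar)$ follows by the same argument with the temperedness condition removed.

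The main obstacle is the rigidity step in part~(1). The standard invariants of $\pi$ (central character, formal degree, elliptic norm, inertial class of cuspidal support) are all readily seen to be $\Gal(\C/\OQ)$-equivariant and to take values in $\OQ$, but this only yields the weaker conclusion that the Galois orbit of $\pi$ is finite and consists of representations that share these invariants. The delicate point is to exclude a nontrivial orbit of essentially square-integrable representations that agree on every standard invariant but fail to be isomorphic; this is likely to require a more refined descent, for instance using the structure of the endomorphism algebra of a type supporting $\pi$ or an explicit rationality analysis of the relevant affine Hecke algebra within its Bernstein block.
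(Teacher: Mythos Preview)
Your proposal has a genuine gap you do not flag, and the gap you do flag is real and not closed.

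The unflagged gap: the assertion that ``the $L^2$ condition on matrix coefficients is preserved by $\sigma$'' for $\sigma \in \Gal(\C/\OQ)$ is not justified and is far from obvious. Elements of $\Gal(\C/\OQ)$ are wildly discontinuous and do not respect absolute values, so neither the $L^2$ condition nor the equivalent Casselman criterion on Jacquet-module exponents is visibly preserved. The paper spends all of \S\ref{par:Clozel} (Plancherel measures, cocompact lattices, close local fields) proving the analogous statement over $\Gal(\C/\mQ)$, and that result (Theorem~\ref{Cl}) is \emph{not} used in the proof of Theorem~\ref{T2}. The flagged gap --- excluding a non-trivial Galois orbit of essentially square-integrable representations sharing all your invariants --- is also real; the invariants you list do not separate representations, and no counting argument of the kind you sketch will close it.

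The paper avoids both problems by never attempting to prove $\sigma\pi \cong \pi$. Instead (Theorem~\ref{thm:4.7}) it writes the cuspidal support of $\pi$ as $(L, \sigma_0 \otimes \chi)$ with $\sigma_0$ a cuspidal $\OQ L$-representation (Proposition~\ref{prop:3.13}) and $\chi \in X_\nr(L)$, and shows that $\chi$ is $\OQ^\times$-valued. The key input is that $\pi$ is \emph{discrete} in the sense of \cite{BDK}, i.e.\ its class avoids $K_{\mathrm{ind}}(\mcR_G)$ (Lemma~\ref{lem:4.6}); since $\sqrt{q_F}\in\OQ$, parabolic induction commutes with $\Gal(\C/\OQ)$, so discreteness is manifestly Galois-stable, unlike square-integrability. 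By \cite[Proposition~3.1]{BDK} the discrete representations in one Bernstein block form finitely many $X_\nr(G)$-orbits; this forces the Galois orbit of $\chi|_{L\cap G^1}$ to be finite, hence $\OQ^\times$-valued. The central-character hypothesis handles $\chi|_{Z(G)}$, and $[G:G^1 Z(G)]<\infty$ finishes it. Proposition~\ref{prop:3.14} then gives descent of $\pi$ directly from descent of its cuspidal support.

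Finally, your treatment of part~(2) misidentifies the generators of $\mcK'_{G,\temp}(ar)$: they are the virtual representations $I_P^G(\delta)_r$ with $\delta$ square-integrable on a proper Levi $L$ and $r\in R_{\delta,\el}$, not essentially square-integrable $G$-representations. Descent of these requires first descending $\delta$ to $\OQ$ --- which needs an analogue of the above argument with Lemma~\ref{lem:5.3}.(2) replacing BDK-discreteness as the finiteness input --- and then choosing the intertwining operators $J_\delta(r)$ over $\OQ$ (Lemma~\ref{lem:indep}.(3)); this is Theorem~\ref{thm:5.4}.
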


For semisimple $p$-adic groups (or more generally reductive $p$-adic groups with 
compact centre), Theorem \ref{T2} says, in various ways, that the elliptic
part of $\mcK_G$ or $\mcK_{G,\text{temp}}$ is the same for representations over $\C$ and
over $\OQ$. Moreover this elliptic part is stable under Gal$(\C / \mQ)$.
For reductive $p$-adic groups, all this holds true because we restricted to $\mcR'_G$.\\

The second general goal of the paper is to show that well-known classes of 
$\C G$-representations are stable under the action of Gal$(\C / \mQ)$. 
It is clear from the definitions that the classes of
irreducible representations and of cuspidal representations 
are stable under Gal$(\C / \mQ)$. Clozel showed that the same 
holds for square-integrable representations (when char$(F)=0$), but his proof was
never worked out. We provide a different proof and we generalize his result to 
elliptic (virtual) $\C G$-representations, without conditions on char$(F)$.  

\begin{theorem} \label{thm:1.3}
\textup{(see Theorem \ref{Cl} and Corollary \ref{thm:5.7})}
\begin{enumerate}
\item The set of essentially square-integrable $\C G$-representations is stable
under $\Gal (\C / \mQ)$.
\item The set of elliptic $\C G$-representations is stable under $\Gal (\C / \mQ)$.
\item The spaces $\mcK_G (ar)$ and $\mcK'_G (ar) = \mcK_{\OQ G} (ar)$ are stable 
under $\Gal (\C / \mQ)$.
\end{enumerate}    
\end{theorem}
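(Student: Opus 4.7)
My overall strategy is to prove (3) via Arthur's elliptic pairing, deduce (2) from it, and handle (1) by an intrinsic algebraic characterization of essential square-integrability that sidesteps the non-Galois-equivariance of the complex absolute value.

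For (3), I would use Arthur's characterization (with the non-tempered extension in Paragraph~\ref{par:7.2}) of $\mcK_G(ar)$ as the complement in $\mcK_G$ to the subspace $\mcK_G(\mathrm{ind})$ of properly induced representations, cut out by non-degeneracy of the elliptic pairing $\langle\cdot,\cdot\rangle_\el$ obtained by integrating Harish-Chandra characters over the regular elliptic set $G^{\el\text{-reg}}\subset G$. Both $G^{\el\text{-reg}}$ and its natural invariant measure are intrinsic to $\mcG$, while $\Theta_{\sigma(\pi)}=\sigma\circ\Theta_\pi$ for $\sigma\in\Gal(\C/\mQ)$. Hence $\langle\sigma\pi,\sigma\pi'\rangle_\el=\sigma\langle\pi,\pi'\rangle_\el$; since parabolic induction commutes with $\sigma$, the subspace $\mcK_G(\mathrm{ind})$ is $\Gal(\C/\mQ)$-stable, and by non-degeneracy of the pairing so is $\mcK_G(ar)$. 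The variant $\mcK'_G(ar)$ is preserved because the condition ``central character in $\OQ^\times$'' is itself Galois-stable.

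For (2), the elliptic (virtual) representations are, after Arthur, an intrinsic basis-like subset of $\mcK_G(ar)$ indexed by combinatorial data attached to $L$-packets; since $\Gal(\C/\mQ)$ permutes irreducibles (Theorem~\ref{T1}), preserves $\mcK_G(ar)$ by (3), and leaves this indexing intact, it permutes elliptic representations. For (1), I would characterize essential square-integrability intrinsically: $\pi$ is essentially square-integrable iff its Bernstein component is of ``discrete-series type,'' a condition expressible purely in terms of the cuspidal support $(L,\tau)$ and the associated affine Hecke algebra---data intrinsic to $\mcG$ over $F$. Since $\sigma$ sends $(L,\tau)$ to $(L,\sigma(\tau))$ and the Hecke-algebraic structure depends only on the inertial class of the cuspidal support, this characterization is manifestly Galois-stable, giving (1).

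The hardest step is setting up the intrinsic characterization used in (1). The complex absolute value underlying the matrix-coefficient (or Casselman-criterion) definition of square-integrability does not commute with $\Gal(\C/\mQ)$, so direct transport of the analytic condition across $\sigma$ fails. In characteristic zero the equivalence of essential square-integrability with the Hecke-algebraic discrete-series condition is essentially the content of Clozel's theorem; the novelty here is extending this equivalence to positive characteristic, which should follow from the Bernstein-center and Hecke-algebra machinery now available in full generality (after Heiermann, Solleveld and others).
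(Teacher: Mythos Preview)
Your approach to (1) rests on a false characterization. You claim that $\pi$ is essentially square-integrable if and only if its Bernstein component is of ``discrete-series type,'' but this is not so: essential square-integrability is a property of an individual representation, not of its inertial class. For instance, in $GL_2$ the Steinberg representation and the trivial representation lie in the same (unramified) Bernstein block, yet only the Steinberg is square-integrable. No purely Hecke-algebraic or Bernstein-component invariant distinguishes the discrete-series points inside a block from the others without some analytic input; the Casselman exponent criterion or the formal degree is exactly this input, and neither is manifestly $\Gal(\C/\mQ)$-equivariant.

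The paper proceeds in the opposite logical order. It proves (1) first, by Clozel's analytic argument: for semisimple $G$ over a $p$-adic field, the formal degree $\mu_{Pl}(\{\pi\})$ is realized as a limit of normalized multiplicities of $\pi$ in $C^\infty(\Gamma_n\backslash G)$ for a tower of cocompact arithmetic lattices (Theorem~\ref{thm:4.3}); since $C^\infty(\Gamma_n\backslash G)$ carries a $\mQ$-structure, these multiplicities are Galois-invariant, hence so is the formal degree, hence square-integrability (Proposition~\ref{prop:Clozel}). Positive characteristic is handled by transfer along close local fields (Lemma~\ref{lem:4.9}). Only then does the paper attack (2) and (3), and (1) is genuinely needed: $\mcK_G(ar)$ is spanned by $I_P^G(\delta)_r$ with $\delta$ essentially square-integrable, so one must first know that $\delta^\gamma$ is again essentially square-integrable before the explicit computation $I_P^G(\delta)_r^\gamma \cong I_P^G(\delta^\gamma\otimes\chi_-^{\epsilon(\gamma)})_r$ of Proposition~\ref{prop:5.5} makes sense. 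Note also that your claim ``parabolic induction commutes with $\sigma$'' is false: normalized $I_P^G$ involves $\sqrt{\delta_P}\in\mQ(\sqrt{q_F})$, so $I_P^G(\pi)^\gamma\cong I_P^G(\pi^\gamma\otimes\chi_-^{\epsilon(\gamma)})$ as in \eqref{eq:6.27}; and Arthur's elliptic pairing is Hermitian, so it does not satisfy the simple equivariance $\langle\sigma\pi,\sigma\pi'\rangle_\el=\sigma\langle\pi,\pi'\rangle_\el$ you assert.
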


We made the classes of representations in Theorem \ref{thm:1.3} into sets by considering
the objects up to the isomorphism. When char$(F) > 0$ and $G$ is semisimple, Theorem
\ref{thm:1.3}.(1) is claimed in \cite[Theorem 4.2]{CiHa}. The authors of \cite{CiHa}
sketch an argument, but their proof is still forthcoming.\\

Our main results have already been applied in two ways. In \cite{KaVa2} they have been
used to establish endoscopic decompositions of spaces of class functions on $G$, with
coefficients in various fields. In \cite{Sol4}, Theorem \ref{thm:1.3} has been generalized
to standard $\C G$-modules, which for instance enables one to define standard modules
with other coefficient fields. Both these applications fit nicely with approaches
to the local Langlands correspondence that involve representations over $\OQ_\ell$.\\

We conclude the introduction with a brief outline of the paper.
In Section~\ref{sec:4} the basic properties of (irreducible) $\OQ G$-representations 
are studied, and related to $\C G$.

Section \ref{sec:5} has an algebraic part, in which Theorem
\ref{T2}.(1) for essentially square-integrable representations is shown. In the analytic
part of Section \ref{sec:5}, we investigate formal degrees of square-integrable 
representations to prove that Gal$(\C / \mQ)$ permutes such representations.

Section \ref{sec:7} generalizes our results about essentially square-integrable
representations and finishes the proofs 
of Theorems \ref{T2} and \ref{thm:1.3}.

\section{Preliminaries about $G$-representations over $\OQ$ and over $\C$}\label{sec:4}

In this section we collect some results which we will use later, most of them come 
from \cite{Vig}.

The group of field automorphisms Gal$(\C / \mQ)$ 
act naturally on complex representations of $G$. Namely, for every  complex
representation $(\pi ,V)$ of $G$ and $\Gg \in \text{Gal}(\C / \mQ)$ we denote by 
$(\pi^{\Gg} ,V^{\Gg})$ the complex representation of $G$, where the vector space
$V^{\Gg}$ is defined as the base change $\C\otimes_{\Gg,\C} V$ 
with respect to the isomorphism $\Gg:\C\to\C$ and $G$ acts on $V^{\Gg}$ by the formula
$g(a\otimes v)= a\otimes gv$. 

This means that, if $\pi (g)_{ij}$ is the matrix of 
$\pi (g)$ with respect to some (possibly infinite) basis of $V$, then the matrix of 
$\pi^\gamma (g)$ with respect to the corresponding basis of $V^\gamma$ is 
$\gamma (\pi (g)_{ij})$.
This action of $\Gg$ respects (among others) characters, finite length and admissibility 
\cite[\S II.4.3]{Vig}.\\

We denote by $\mcH _G$ the Hecke algebra of compactly supported locally constant complex
valued measures on $G$ where the product is the convolution. A smooth complex representation
$(\pi ,V)$ of $G$ defines a representation of the algebra $\mcH _G$ on $V$, which
we also denote by $\pi$.

The category $\mcR _G$ is naturally equivalent to the full subcategory of the category of 
finite length nondegenerate $\mcH _G$-modules. We go freely from the language of complex 
$G$-representations to the language of $\mcH_G$-modules.

\begin{definition} Let  $K\subset G$ be an open compact subgroup.
\begin{enumerate}
\item $\mcH _{G,K} \subset \mcH _G $ is the subalgebra of $K$-bi-invariant measures.
\item For a representation $(\pi ,V)$ of $G$ we denote by $V^K\subset V$ the subspace of
$K$-invariant vectors, which is a $\mcH _{G,K}$-module.
\item $\mcH_{\OQ G}$ and $\mcH_{\OQ G,K}$ are the versions of $\mcH_{G}$ and $\mcH_{G,K}$
with coefficients in $\OQ$ instead of in $\C$.
\end{enumerate}
\end{definition}

\begin{proposition}\label{Kir} 
\textup{\cite[Proposition I.3.2]{Ren}} \\
Let $V$ be an irreducible $\C G$-module with $V^K \neq 0$. Then $V^K$ is
an irreducible $\mcH_{G,K}$-module and $V \cong \C [G/K] \otimes_{\mc H_{G,K}} V^K$. 

The same holds with coefficients in $\OQ$ instead of in $\C$.
\end{proposition}

Since $\OQ$ is algebraically closed and of characteristic zero, most (but not all) 
of the abstract representation theory of $G$ works the same over $\OQ$ as over $\C$. 
Schur's lemma does not apply automatically over $\OQ$, because the cardinality of 
$\OQ$ is not larger than that of $G/K$ for an open compact subgroup $K$ of $G$.
To prove it for $\OQ G$, one first has to establish admissibility of irreducible
representations, which is also not obvious.

\begin{theorem}\label{thm:3.4}
\textup{\cite[\S II.2.8]{Vig}} \\
Every irreducible representation of $\OQ G$ is admissible and has
a central character.
\end{theorem}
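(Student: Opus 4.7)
I reduce to the cuspidal case proved in Proposition \ref{prop:3.3} via parabolic induction. Consider an irreducible smooth $(\pi,V)$ of $\OQ G$; the case of $\OQ G^1$ follows by a parallel argument (the unipotent radical of any $F$-parabolic lies in $G^1$, being generated by its compact open subgroups, so parabolics of $G^1$ arise as $(L\cap G^1)U$ and the same Jacquet machinery applies). I will show that $V$ embeds into $I_P^G(\sigma)$ for some parabolic $P = LU$ of $G$ and some irreducible cuspidal $\OQ L$-representation $\sigma$. Granting this, $\sigma$ is admissible by Proposition \ref{prop:3.3}; normalized parabolic induction preserves admissibility (the standard Iwasawa-decomposition argument, reducing $I_P^G(\sigma)^K$ to a finite sum of spaces of $\sigma^{K_L}$-type for suitable compact open $K_L \subset L$, is field-independent); and subrepresentations of admissible representations are admissible. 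Hence $V$ is admissible, and Lemma \ref{lem:3.1} then supplies the central character.

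\textbf{Producing the embedding.} If $\pi$ is cuspidal, take $P = G$ and $\sigma = \pi$. Otherwise, there is a proper parabolic $P = LU$ with Jacquet module $V_U \neq 0$. Since $V$ is cyclic it is finitely generated, and Bernstein's theorem that the Jacquet module of a finitely generated smooth representation is finitely generated (see \cite[\S VI.6]{Ren}) applies verbatim over $\OQ$, for its proof uses only that the coefficients form a field of characteristic zero together with the existence of compact open subgroups admitting an Iwahori decomposition as in Theorem \ref{B}. Hence $V_U$ admits an irreducible $\OQ L$-quotient $\tau$, and Frobenius reciprocity converts the surjection $V_U \twoheadrightarrow \tau$ into a nonzero, hence (by irreducibility of $V$) injective, map $V \hookrightarrow I_P^G(\tau)$. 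Iterate: if $\tau$ is not cuspidal on $L$, embed $\tau \hookrightarrow I_{P'}^L(\tau')$ for a proper parabolic $P' = L'U'$ of $L$, and compose using transitivity of parabolic induction, enlarging the unipotent radical to $U'U$ on $G$. The semisimple rank of the Levi strictly decreases at each step, so the procedure halts with a cuspidal $\sigma$ on some Levi, yielding the required embedding.

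\textbf{Main obstacle.} The substantive ingredient is the finite generation of Jacquet modules with $\OQ$-coefficients; however this is not a new theorem, only a verification that Bernstein's proof is purely field-theoretic and descends from $\C$ to $\OQ$ (an analogue of what was done in Lemma \ref{qbarform} for cuspidal Hecke subalgebras). Once that is in place, the chain \emph{cuspidal admissible} (Proposition \ref{prop:3.3}) $\Rightarrow$ \emph{induced admissible} $\Rightarrow$ \emph{$\pi$ admissible} $\Rightarrow$ \emph{central character exists} (Lemma \ref{lem:3.1}) is formal.
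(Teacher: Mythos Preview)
For $\OQ G$, your argument is correct and essentially identical to the paper's: both embed $\pi$ into $I_P^G(\sigma)$ with $\sigma$ irreducible cuspidal on a Levi $L$, invoke Proposition \ref{prop:3.3} for $\sigma$, and use that $I_P^G$ preserves admissibility. The only difference is cosmetic: the paper cites the embedding directly as \cite[Corollaire VI.2.1]{Ren} (whose proof is field-independent in characteristic zero), whereas you sketch that proof via finite generation of Jacquet modules and Frobenius reciprocity.

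For $\OQ G^1$, however, your one-line ``parallel argument'' has a gap. Your iteration terminates with an irreducible cuspidal representation of $L \cap G^1$ for some proper Levi $L$ of $G$, but $L \cap G^1$ is in general strictly larger than $L^1$: for example, when $G = GL_2$ and $L$ is the diagonal torus, $L^1 = (\mathfrak o_F^\times)^2$ while $L \cap G^1 = \{(a,b) : v_F(ab) = 0\}$. Proposition \ref{prop:3.3} is proved only for $G$ and $G^1$ (equivalently, for $L$ and $L^1$), not for such intermediate groups, so you cannot invoke it as stated. The paper sidesteps this by a different reduction: given $(\rho,W) \in \Irr(\OQ G^1)$, it quotes \cite[\S 2]{Tad} to produce a $(\pi,V) \in \Irr(\OQ G)$ whose restriction to $G^1$ contains $\rho$, and then $\dim W^K \leq \dim V^K < \infty$ by the already-established $G$ case. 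Your route is salvageable---one can rerun the proof of Proposition \ref{prop:3.3} for any group sandwiched between $L^1$ and $L$, using that the quotient is free abelian of finite rank together with Lemma \ref{lem:3.2}---but as written the $G^1$ case is incomplete.
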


We will use Theorem \ref{thm:3.4} many times in this paper, sometimes implicitly.
Consider the functor
\begin{equation}
\otimes_\OQ \C : \Rep (\OQ G) \to \Rep (\C G).
\end{equation}

\begin{proposition}\label{prop:2.9}
Let $(\pi,V) \in \Irr (\OQ G)$.
\begin{enumerate}
\item The $\C G$-representation $(\pi, V \otimes_\OQ \C)$ is irreducible. 
\item Suppose that $(\pi',V') \in \Irr (\OQ G)$ and that $(\pi, V \otimes_\OQ \C)
\cong (\pi', V' \otimes_\OQ \C)$. Then $\pi \cong \pi'$.
\end{enumerate}
\end{proposition}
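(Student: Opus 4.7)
The plan is to reduce both statements to finite-dimensional linear algebra via Lemma \ref{Kir} and the admissibility of irreducibles (Theorem \ref{thm:3.4}), with Schur's lemma (Lemma \ref{lem:3.1}) supplying the crucial absolute simplicity over $\OQ$.

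For part (1), I would fix a compact open subgroup $K\subset G$ and consider $V^K$. By admissibility, $V^K$ is finite-dimensional over $\OQ$, and by Lemma \ref{Kir} applied over $\OQ$ it is either zero or simple as an $\mcH_{\OQ G,K}$-module. In the nonzero case, Schur's lemma gives $\End_{\mcH_{\OQ G,K}}(V^K)=\OQ$, so since $\OQ$ is algebraically closed and $V^K$ is finite-dimensional, the Jacobson density theorem forces the image of $\mcH_{\OQ G,K}$ in $\End_\OQ(V^K)$ to be the entire matrix algebra. Base-changing to $\C$, the image of $\mcH_{\C G,K}=\mcH_{\OQ G,K}\otimes_\OQ\C$ in $\End_\C(V^K\otimes_\OQ\C)$ is still the full endomorphism algebra, so the $\mcH_{\C G,K}$-module $(V\otimes_\OQ\C)^K=V^K\otimes_\OQ\C$ is simple. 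Since this holds for every $K$, applying Lemma \ref{Kir} over $\C$ gives irreducibility of $V\otimes_\OQ\C$.

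For part (2), I would pick $K$ with $V^K\neq 0$; taking $K$-invariants of the given isomorphism $V\otimes_\OQ\C\cong V'\otimes_\OQ\C$ then shows $V'^K\neq 0$ and yields $V^K\otimes_\OQ\C\cong V'^K\otimes_\OQ\C$ as $\mcH_{\C G,K}$-modules. Let $I,I'\subset \mcH_{\OQ G,K}$ denote the annihilators of $V^K$ and $V'^K$; the isomorphism of base changes forces $I\otimes_\OQ\C=I'\otimes_\OQ\C$, hence $I=I'$. Both $V^K$ and $V'^K$ are then faithful simple modules over the same quotient $\mcH_{\OQ G,K}/I$, which by the density argument from part (1) is a matrix algebra over $\OQ$; so $V^K\cong V'^K$ as $\mcH_{\OQ G,K}$-modules. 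To promote this to an isomorphism $V\cong V'$ of $\OQ G$-representations, I would invoke the standard fact that an irreducible smooth $\OQ G$-representation with $V^K\neq 0$ is recovered from $V^K$ via a quasi-inverse to $W\mapsto W^K$, exactly as in the proof of Lemma \ref{equiv}.

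The main subtlety I expect is ensuring that the descent from $\C$ to $\OQ$ in the simple-module isomorphism is clean; the annihilator argument above handles this, but one could equivalently note that absolutely simple modules over an algebraically closed field descend uniquely since their characters already take values in the base field. Everything else is bookkeeping for the equivalence between irreducible $\OQ G$-representations generated by $K$-fixed vectors and simple $\mcH_{\OQ G,K}$-modules.
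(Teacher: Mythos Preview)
Your proof of part (1) is essentially identical to the paper's: reduce to $\mcH_{\OQ G,K}$-modules $V^K$ via Lemma \ref{Kir}, use admissibility (Theorem \ref{thm:3.4}) to get finite dimension, apply Burnside/density to conclude $\pi(\mcH_{\OQ G,K})=\End_\OQ(V^K)$, then base-change and apply Lemma \ref{Kir} again over $\C$.

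For part (2) the paper takes a slightly different route to the key descent step $V^K\cong V'^K$: rather than comparing annihilators, it simply observes that the finite-dimensional modules $V^K\otimes_\OQ\C$ and $V'^K\otimes_\OQ\C$ have equal traces on $\mcH_{G,K}$, hence on $\mcH_{\OQ G,K}$, so $V^K$ and $V'^K$ have equal characters and are therefore isomorphic. Your annihilator argument is also correct and roughly the same length. For the final promotion $V^K\cong V'^K \Rightarrow V\cong V'$, the paper invokes \cite[Th\'eor\`eme III.1.5.(ii)]{Ren}, which holds for an arbitrary compact open $K$; your reference to Lemma \ref{equiv} is slightly imprecise since that lemma assumes an Iwahori decomposition, but the fact you need is exactly the cited result from \cite{Ren} and does not require that hypothesis.
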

\begin{proof}
(1) See \cite[\S II.4.5]{Vig}. We take this opportunity to point that the proof in
\cite{Vig} is not entirely complete. Apart from the existence of a central character,
and also has to use that by \cite[\S II.2.4]{Vig} the representation 
$(\pi, V \otimes_\OQ \C)$ is completely reducible.\\
(2) Let $K \subset G$ be a compact open subgroup such that $V^K$ and $V'^K$ are nonzero. 
Let $e_K \in \mcH_{\OQ G}$ be the corresponding idempotent, then
\begin{equation}\label{eq:VK}
( V \otimes_\OQ \C)^K = \pi (e_K) (V \otimes_\OQ \C) =
(\pi (e_K) V) \otimes_\OQ \C = V^K \otimes_\OQ \C .
\end{equation}
By \eqref{eq:VK}, the $\mcH_{G,K}$-modules $V^K \otimes_\OQ \C$ and
$V'^K \otimes_\OQ \C$ are isomorphic. By the admissibility in Theorem \ref{thm:3.4} these 
are finite dimensional modules, so their 
traces are the same. For any $f \in \mcH_{\OQ G,K}$ we have
\[
\text{tr}(f,V^K) = \text{tr}(f,V^K \otimes_\OQ \C) = \text{tr}(f,V'^K \otimes_\OQ \C) =
\text{tr}(f,V'^K) .
\]
From Proposition \ref{Kir} we know that $V^K$ and $V'^K$ are irreducible. Since their
traces are the same, they are in fact isomorphic $\mcH_{\OQ G,K}$-modules. Now the
expression for $V$ in terms of $V^K$ in Proposition \ref{Kir} shows that 
$(\pi,V)$ and $(\pi',V')$ are isomorphic. 
\end{proof}

Proposition \ref{prop:2.9}.(1) can be generalized to finite length representations, in the
following sense.

\begin{lemma}\label{lem:3.12}
\textup{\cite[II.4.10.c]{Vig}} \\
Let $(\pi,V) \in \Rep (\C G)$ have finite length, and suppose that $\pi$ can be realized
over $\OQ$. Then every irreducible constituent of $\pi$ can be realized as an
irreducible $\OQ G$-representation.
\end{lemma}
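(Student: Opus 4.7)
The plan is to reduce to an irreducible composition series on the $\OQ$-side and then invoke Proposition \ref{prop:2.9}.(1) together with Jordan--Hölder.

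First I would fix an $\OQ G$-module $(\pi_0, V_0)$ realizing $\pi$, i.e.\ with $V_0 \otimes_\OQ \C \cong V$ as $\C G$-representations. The first step is to show that $V_0$ has finite length as an $\OQ G$-module. Since $\C$ is faithfully flat over $\OQ$, any strictly increasing chain of $\OQ G$-submodules of $V_0$ would remain strictly increasing after applying $- \otimes_\OQ \C$, giving a strictly increasing chain of $\C G$-submodules of $V$, contradicting the finite length of $\pi$. So the length of $V_0$ is finite (and in fact bounded by the length of $V$).

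Next I would choose a composition series
\[
0 = V_0^{(0)} \subsetneq V_0^{(1)} \subsetneq \cdots \subsetneq V_0^{(n)} = V_0
\]
of $\OQ G$-modules, with each subquotient $W_i := V_0^{(i)}/V_0^{(i-1)}$ irreducible in $\Rep(\OQ G)$. Applying the exact functor $- \otimes_\OQ \C$ yields a filtration
\[
0 \subsetneq V_0^{(1)} \otimes_\OQ \C \subsetneq \cdots \subsetneq V_0^{(n)} \otimes_\OQ \C = V,
\]
with subquotients $W_i \otimes_\OQ \C$. By Proposition \ref{prop:2.9}.(1), each $W_i \otimes_\OQ \C$ is irreducible in $\Rep(\C G)$, so this is a composition series of $V$.

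Finally, by the Jordan--Hölder theorem applied to $V \in \Rep(\C G)$, the irreducible constituents of $\pi$ (with multiplicity) are precisely the $\C G$-representations $W_i \otimes_\OQ \C$. Each one is realized over $\OQ$ by the irreducible $\OQ G$-representation $W_i$, which proves the lemma. The only subtle step is the initial length-finiteness claim; everything else is a direct application of prior results.
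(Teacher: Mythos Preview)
Your proof is correct and follows essentially the same route as the paper's: both first establish finite length of the $\OQ$-form (the paper phrases this as ``$\otimes_\OQ \C$ is exact and sends nonzero to nonzero, hence $\pi_\OQ$ is Noetherian and Artinian'', which is your faithful-flatness argument), then tensor up a composition series and apply Proposition~\ref{prop:2.9}.(1) to see the result is a composition series of $\pi$. Your explicit invocation of Jordan--H\"older is implicit in the paper's concluding sentence.
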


Recall that a representation $(\pi ,V)$ of $\C G$ is cuspidal if, for every proper
pa\-ra\-bolic subgroup $P = L U_P$ of $G$, the space of $U_P$-coinvariants in $V$
is zero. Equivalently, $V$ is spanned by $\{ v - \pi (u) v : v \in V, u \in U_P \}$.
If $\Gg \in \Gal (\mC /\mQ)$ and $\pi$ is a cuspidal representation of $G$, 
then the re\-pre\-sentation $\pi ^\Gg$ is cuspidal \cite[\S II.4.3.c]{Vig}.

We note that $\otimes_\OQ \C$ sends cuspidal representations to cuspidal representations.
In fact, for cuspidal representations there is only little difference between working
over $\OQ$ and over $\C$:

\begin{proposition}\label{prop:3.13} 
\textup{\cite[\S II.4.9]{Vig}} \\
Let $(\pi, V) \in \Irr (\C G)$ be cuspidal. Then $\pi$ can be realized over $\OQ$
if and only if the central character cc$(\pi) : Z(G) \to \C^\times$ takes values 
in $\OQ^\times$.
\end{proposition}

Let $G^1 \subset G$ be the open normal subgroup generated by all compact subgroups of $G$.
Recall that a character of $G$ is called unramified if it is trivial on $G^1$.
We denote the group of unramified characters of $G$ by $X_{\nr}(G)$.
Proposition \ref{prop:3.13} relates to the following observation.

\begin{lemma} \label{lem:finim}
For every $\pi \in \Irr (\C G)$ there exists $\chi\in X_{\nr}(G)$ such that the 
central character cc$(\chi\otimes\pi)$ has finite image, thus takes values in 
$\{ z \in \OQ^\times : |z| = 1\}$.
\end{lemma}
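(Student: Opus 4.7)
Write $\theta := cc(\pi) \colon Z(G) \to \C^\times$. The plan is to reduce the problem, via the decomposition $G^1 Z(G) = G^1 \times \mZ^d_G$ of \eqref{eq:3.21}, to the purely abelian question of extending one prescribed character on $\mZ^d_G$ to an unramified character of $G$.

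First I would observe that any $\chi \in X_{nr}(G)$ is trivial on $G^1 \supseteq Z(G)^1$, so $cc(\chi \otimes \pi) = \chi|_{Z(G)} \cdot \theta$ already agrees with $\theta$ on $Z(G)^1$. Since $\theta$ is smooth and $Z(G)^1$ is profinite, $\theta(Z(G)^1)$ is automatically a finite subgroup of roots of unity. Because $\{x_1,\ldots,x_d\}$ is a $\mZ$-basis of $Z(G)/Z(G)^1$, one has $Z(G) = Z(G)^1 \cdot \mZ^d_G$, so $cc(\chi \otimes \pi)$ has finite image as soon as $(\chi\theta)(x_i)$ is a root of unity for each $i$. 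I will aim for the stronger condition $\chi(x_i) = \theta(x_i)^{-1}$.

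The construction of such a $\chi$ is the only point that needs any argument. I identify $X_{nr}(G)$ with $\Hom(G/G^1, \C^\times)$, so that restriction along $\mZ^d_G \to G/G^1$ gives a map $X_{nr}(G) \to \Hom(\mZ^d_G, \C^\times) = (\C^\times)^d$. The map $\mZ^d_G \to G/G^1$ is injective, because \eqref{eq:3.21} forces $\mZ^d_G \cap G^1 = \{1\}$. Since $\C^\times$ is a divisible, and therefore injective, abelian group, $\Hom(-, \C^\times)$ converts this injection into a surjection, yielding a $\chi \in X_{nr}(G)$ with $\chi|_{\mZ^d_G} = (\theta|_{\mZ^d_G})^{-1}$. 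Then $cc(\chi \otimes \pi)$ is trivial on $\mZ^d_G$ and equals $\theta$ on $Z(G)^1$, so its image is exactly $\theta(Z(G)^1)$, a finite subgroup of $\{z \in \OQ^\times : |z| = 1\}$. There is no substantive obstacle beyond invoking injectivity of $\C^\times$ in abelian groups.
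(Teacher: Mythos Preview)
Your proof is correct and follows essentially the same route as the paper's: both observe that $\theta(Z(G)^1)$ is finite by smoothness and compactness, prescribe a character inverting $\theta$ on the free abelian part $\mZ^d_G \cong Z(G)/Z(G)^1$, and then extend to an unramified character of $G$ using that $\C^\times$ is divisible (equivalently, injective in abelian groups). The only cosmetic difference is that the paper phrases the extension step via the finite-index inclusion $Z(G)/Z(G)^1 \hookrightarrow G/G^1$, whereas you use the injection $\mZ^d_G \hookrightarrow G/G^1$ coming directly from \eqref{eq:3.21}; these amount to the same thing.
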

\begin{proof}
Let $Z(G)^1$ be the unique maximal compact subgroup of $Z(G)$. Recall that the
group $G^1 Z(G) / G^1 \cong Z(G) / Z(G)^1$ is free abelian and
finitely generated. We fix a subset $\{x_1,\ldots,x_d\} \subset Z(G)$ whose image
in $Z(G)/Z(G)^1$ is $\mZ$-basis. We define
\begin{equation}\label{eq:3.20}
\mZ^d_G := \text{subgroup of } Z(G) \text{ generated by } \{x_1,\ldots,x_d\} .
\end{equation}
This is a group isomorphic to $\mZ^d$ and
\begin{equation}\label{eq:3.21}
G^1 Z(G) = G^1 \times \mZ^d_G .
\end{equation}
Since $Z(G)^1$ is compact and cc$(\pi)$ is smooth, the image cc$(\pi) (Z(G)^1)$ is a
finite subgroup of $\C^\times$. Define a character
$\chi' : Z(G) / Z(G)^1 \to \C^\times$ by $\chi' (x) =
\text{cc}(\pi)(x^{-1})$ for $x \in \mZ^d_G$. Then the map 
cc$(\pi) \otimes \chi' : Z(G) \to \C^\times$ sends  $\mZ^d_G$ to 1, so has finite image. 

The group $Z(G) / Z(G)^1$ embeds in $G / G^1$ with finite index. As $\C^\times$ is 
di\-vi\-sible, we can extend $\chi'$ to a $\C^\times$-valued character $\chi$ of $G / G^1$, or equivalently an unramified character of $G$. Then 
cc$( \pi \otimes \chi) = \text{cc}(\pi) \otimes \chi'$ has finite image.  
The last assertion follows from the observation that every finite subgroup of $\C^\times$ is contained in $\{ z \in \OQ^\times : |z| = 1\}$.
\end{proof}

For an arbitrary irreducible $\C G$-representation $\pi$ the cuspidal support Sc$(\pi)$
consists of a Levi subgroup $L \subset G$ and a cuspidal $\C L$-repre\-sen\-ta\-tion
$\sigma$, such that $\pi$ is a constituent of the parabolic induction of $\sigma$
(with respect to any parabolic subgroup with Levi factor $L$). This determines $(L,\sigma)$
uniquely up to $G$-conjugacy. We will also express that by saying that $(L,\sigma)$
represents Sc$(\pi)$.

The question whether or not $\pi$ can be
defined over $\OQ$ can be reduced to the cuspidal case:

\begin{proposition}\label{prop:3.14}
Let $(\pi,V) \in \Irr (\C G)$. Then $\pi$ can be realized over $\OQ$ if and only if its
cuspidal support can be realized over $\OQ$.
\end{proposition}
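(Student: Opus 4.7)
The plan is to prove both implications by combining Lemma \ref{lem:3.12} with the compatibility of parabolic induction and Jacquet modules with extension of scalars. The key auxiliary observation, used throughout, is that for an $\OQ G$-representation $W$ and a parabolic $P = L U_P$, the normalized parabolic induction and the Jacquet module commute with $\otimes_\OQ \C$:
\begin{equation*}
I_P^G(W) \otimes_\OQ \C \;\cong\; I_P^G(W \otimes_\OQ \C), \qquad W_{U_P} \otimes_\OQ \C \;\cong\; (W \otimes_\OQ \C)_{U_P}.
\end{equation*}
Both isomorphisms are immediate from the functorial constructions (smooth induction is a space of sections, and the Jacquet module is a quotient), since $\otimes_\OQ \C$ is exact. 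In particular, $\otimes_\OQ \C$ carries cuspidal representations to cuspidal representations, and Proposition \ref{prop:2.9}.(1) ensures it carries irreducibles to irreducibles.

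For the forward direction, suppose $(\pi, V) \cong (\pi_\OQ, V_\OQ) \otimes_\OQ \C$ for some irreducible $\OQ G$-representation $\pi_\OQ$. By the $\OQ$-version of \cite[Corollaire VI.2.1]{Ren} (which applies because Theorem \ref{thm:3.4} gives admissibility over $\OQ$), $\pi_\OQ$ embeds in some $I_P^G(\sigma_\OQ)$ with $(\pi_\OQ, \sigma_\OQ)$ an irreducible cuspidal $\OQ L$-representation. Then $(L, \sigma_\OQ)$ represents the cuspidal support of $\pi_\OQ$. Applying $\otimes_\OQ \C$ and using the display above together with Proposition \ref{prop:2.9}.(1), the representation $\sigma := \sigma_\OQ \otimes_\OQ \C$ is irreducible and cuspidal, and $\pi$ embeds in $I_P^G(\sigma)$. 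Hence $(L, \sigma)$ represents the cuspidal support of $\pi$, and this pair is manifestly realized over $\OQ$.

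For the backward direction, suppose the cuspidal support of $\pi$ is represented by $(L, \sigma)$ with $\sigma \cong \sigma_\OQ \otimes_\OQ \C$ for some irreducible $\OQ L$-representation $\sigma_\OQ$. Form $I_P^G(\sigma_\OQ) \in \Rep(\OQ G)$; by admissibility (Theorem \ref{thm:3.4}) and \cite[Lemme III.2.3]{Ren} applied over $\OQ$, this is a finite length $\OQ G$-representation. The display above identifies $I_P^G(\sigma_\OQ) \otimes_\OQ \C$ with $I_P^G(\sigma)$, which has $\pi$ as an irreducible constituent. Lemma \ref{lem:3.12} then shows that every irreducible constituent, and in particular $\pi$, can be realized over $\OQ$.

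The two directions are each quite short once one grants the base-change compatibility of parabolic induction and Jacquet modules; no step presents a real obstacle, but the most delicate point to check carefully is the forward direction, where one must verify that $\sigma_\OQ \otimes_\OQ \C$ is genuinely the cuspidal support of $\pi$ (rather than merely a cuspidal pair with $\pi$ in its induction). This follows because $\sigma_\OQ \otimes_\OQ \C$ is irreducible cuspidal and the cuspidal support is uniquely determined up to $G$-conjugacy by the property of having $\pi$ as a subquotient of the corresponding parabolic induction.
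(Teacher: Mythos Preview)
Your proof is correct. The backward direction is essentially identical to the paper's argument: parabolic induction is defined over $\OQ$, the induced representation has finite length, and Lemma \ref{lem:3.12} forces every constituent (in particular $\pi$) to descend.

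For the forward direction your route differs from the paper's. The paper stays on the $\C$-side: it applies the Jacquet functor $J_P^G$ to $\pi$, observes that $J_P^G$ is defined over $\OQ$ so $J_P^G(\pi)$ is realized over $\OQ$, and then invokes Lemma \ref{lem:3.12} again to conclude that each irreducible constituent of $J_P^G(\pi)$ descends; the cuspidal support sits among those constituents for a suitable $P$. You instead pass immediately to the $\OQ$-model $\pi_\OQ$, use the $\OQ$-version of \cite[Corollaire VI.2.1]{Ren} (legitimate by Theorem \ref{thm:3.4}, and already used in the proof of that theorem) to produce an embedding $\pi_\OQ \hookrightarrow I_P^G(\sigma_\OQ)$ with $\sigma_\OQ$ irreducible cuspidal over $\OQ$, and then tensor back up. Your approach avoids a second appeal to Lemma \ref{lem:3.12} at the cost of invoking cuspidal-support theory over $\OQ$; the paper's approach is uniform in that it treats both implications by the same descent lemma. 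Both are short and clean.

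One typographical slip: in your forward direction you wrote ``with $(\pi_\OQ, \sigma_\OQ)$ an irreducible cuspidal $\OQ L$-representation''; you mean $\sigma_\OQ$.
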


\begin{proof}
$\Rightarrow$ For a parabolic subgroup $P = L U$ of $G$, the (normalized) Jacquet
functor $J^G_P : \Rep (\C G) \to \Rep (\C L)$ is defined over $\OQ$. Hence
$J^G_P (\pi)$ can be realized over $\OQ$. It has finite length \cite[\S VI.6.4]{Ren},
so by Lemma \ref{lem:3.12} all its irreducible constituents can be realized over $\OQ$.
By construction \cite[\S VI.7.1]{Ren}, Sc$(\pi)$ can be represented by any of the
constituents of $J^G_P (\pi)$, for a suitable $P$. Hence Sc$(\pi)$ can be realized
over $\OQ$.\\
$\Leftarrow$ The (normalized) parabolic induction functor $I_P^G : \Rep (\C L) \to
\Rep (\C G)$ is defined over $\OQ$.
By definition, $\pi$ is an irreducible subquotient of $I_P^G (\sigma)$, where
$(L,\sigma)$ represents Sc$(\pi)$. By assumption $\sigma$ can be realized over $\OQ$, 
and hence $I_P^G (\sigma)$ can be realized over $\OQ$. It has a finite length 
\cite[Lemme VI.6.2]{Ren}, so by Lemma \ref{lem:3.12} all irreducible constituents of
$I_P^G (\sigma)$ can be realized over $\OQ$.
In particular $\pi$ can be realized over $\OQ$.
\end{proof}

\section{Essentially square-integrable representations}\label{sec:5}

\begin{definition}
\begin{enumerate}
\item A complex $G$-representation $(\pi, V)$ is unitary if
there exists a $G$-invariant po\-si\-tive definite Hermitian form $( , )$ on~$V$.
If $(\pi ,V)$ is irreducible then
such a form is unique up to multiplication by a positive scalar.
\item For a unitary irreducible representation $(\pi , V,(,))$ of $G$ and $v\in V$
we define a matrix coefficient $f_v(g):= (\pi (g)v,v)$. Then $f_v$ and $|f_v|$ are functions
on $G$, and $|f_v|$ can also be regarded as a function on $G/ Z(G)$.

\item An irreducible representation $ (\pi ,V)$ of $G$ is square-integrable (mo\-du\-lo
centre) if  it is unitary and $|f_v| \in L^2 ( G/Z(G) )$ for some non-zero $v \in V$.
Then $|f_v| \in L^2 ( G/Z(G))$ for all $v\in V$.

\item An irreducible representation $(\pi ,V)$ of $G$ is square-integrable (also called discrete
series) if $Z(G)$ is compact and $f_v \in L^2 (G)$ for all $v \in V$.
\item An irreducible representation $(\pi,V)$ is essentially square-integrable if
$\pi \otimes \chi$ is square-integrable (modulo centre) for some $\chi \in X_\nr (G)$.
\end{enumerate}
\end{definition}

\subsection{Some general results} \

Let $\mathcal G_\der$ be the derived group of $\mathcal G$ and write $G_\der =
\mathcal G_\der (F)$. Recall from \cite{Sil} that the restriction of an irreducible
$\C G$-representation to $G_\der$ is always a finite direct sum of irreducible representations.
The following lemma is well-known, we include a proof because we could not find a reference.

\begin{lemma}\label{lem:4.1}
Let $\pi$ be an irreducible $\C G$-representation. Then $\pi$ is essentially square-integrable
if and only if the restriction to $G_\der$ is a direct sum of square-integrable representations.
\end{lemma}
\begin{proof}
$\Rightarrow$ See \cite[Lemma 2.1 and Proposition 2.7]{Tad}. \\
$\Leftarrow$ The absolute value of the central character of $\pi$ extends uniquely to an 
unramified character of $G$ with values in $\RR_{>0}$, say $\nu_G$. Then 
$\pi \otimes \nu_G^{-1}$ has unitary central
character and its restriction to $G_\der$ is the same as that of $\pi$, so a
direct sum of square-integrable representations $\pi_1$. Every matrix coefficient $f$
of $\pi$ from a vector in $\pi_1$ is square-integrable on $G_\der$. Hence $|f|$ is
square-integrable on $G_\der Z(G) / Z(G)$. As $G / G_\der Z(G)$ is compact, it follows
that $|f|$ is also square-integrable on $G / Z(G)$. Hence $\pi \otimes \nu_G^{-1}$
is square-integrable modulo centre and $\pi$ is essentially square-integrable.
\end{proof}

For every parabolic subgroup $P = LU \subset G$, the normalized
parabolic induction functor $I_P^G$ gives a homomorphism $K(\mcR_L ) \to K(\mcR_G )$.
(It only depends on $L$, not on the choice of $P$ when $L$ is given.) Let
$K_{\mathrm{ind}}(\mcR_G)$ be the subgroup generated by the sets $I_P^G (K (\mcR_L))$,
where $L$ runs over all proper Levi subgroups of $G$.

\begin{lemma}\label{lem:4.6}
Let $\pi \in \Irr (\C G)$ be essentially square-integrable. Then the class of $\pi$ in
$K (\mcR_G)$ does not belong to $K_{\mathrm{ind}}(\mcR_G)$.
\end{lemma}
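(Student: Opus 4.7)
The plan is to pass from the Grothendieck group to Harish-Chandra characters and exploit the classical dichotomy between essentially square-integrable representations and properly parabolically induced ones on the elliptic regular set. For every $V \in \mcR_G$ the distribution character is represented by a locally integrable function $\Theta_V$ on $G$ that is smooth on the regular semisimple locus $G_{\mathrm{rs}}$, and the assignment $[V] \mapsto \Theta_V|_{G_{\mathrm{rs}}}$ defines a $\mZ$-linear map $K(\mcR_G) \to C^\infty(G_{\mathrm{rs}})$.

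First I reduce to the case that $\pi$ is square-integrable modulo centre. By Lemma~\ref{lem:4.1} there exists $\chi \in X_{\nr}(G)$ such that $\pi \otimes \chi$ has this property. Since $\chi|_L \in X_{\nr}(L)$ for every Levi $L$ and $I_P^G(\rho) \otimes \chi \cong I_P^G(\rho \otimes \chi|_L)$, twisting by $\chi$ is an automorphism of $K(\mcR_G)$ preserving $K_{\mathrm{ind}}(\mcR_G)$; hence we may replace $\pi$ by $\pi \otimes \chi$.

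I then invoke two classical facts. First, the van Dijk character formula for parabolic induction expresses $\Theta_{I_P^G(\rho)}(g)$ for $g \in G_{\mathrm{rs}}$ as a sum over those $xL \in G/L$ with $x^{-1}gx \in L$. For $L$ proper and $g$ regular semisimple elliptic this index set is empty, because any $G$-conjugate of $g$ lying in $L$ would force the torus $Z_G(g)$ to contain $Z(L)^\circ$, which has positive split rank modulo $Z(G)$. Thus $\Theta_\alpha|_{G_{\mathrm{ell},\mathrm{rs}}} = 0$ for every $\alpha \in K_{\mathrm{ind}}(\mcR_G)$. Second, Harish-Chandra's orthogonality relations for square-integrable representations give $\Theta_\pi|_{G_{\mathrm{ell},\mathrm{rs}}} \neq 0$, since its weighted $L^2$-norm over $G_{\mathrm{ell}}/Z(G)$ computes the inverse formal degree of $\pi$. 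Applying the character map to a hypothetical identity $[\pi] = \sum_i n_i [I_{P_i}^G(\rho_i)]$ with each $L_i$ proper and restricting to $G_{\mathrm{ell},\mathrm{rs}}$ would then produce the contradiction $\Theta_\pi|_{G_{\mathrm{ell},\mathrm{rs}}} = 0$.

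The main obstacle is the nonvanishing of $\Theta_\pi$ on the elliptic set in the generality of the paper, where $F$ may have positive characteristic and the original analytic arguments of Harish-Chandra do not apply verbatim. The cleanest replacement is a pseudocoefficient $f_\pi \in \mcH_G$ with $\mathrm{tr}\,\pi(f_\pi) \neq 0$ and $\mathrm{tr}\, I_P^G(\rho)(f_\pi) = 0$ for every proper parabolic $P$ and every $\rho \in \mcR_L$; evaluating the hypothetical identity against such an $f_\pi$ gives the contradiction $\mathrm{tr}\,\pi(f_\pi) = 0$ directly, and existence of such $f_\pi$ for essentially square-integrable representations is by now available in arbitrary residual characteristic.
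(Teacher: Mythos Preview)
Your proof is correct and follows essentially the same approach as the paper: reduce to the square-integrable modulo centre case by an unramified twist, observe that Harish-Chandra characters of properly induced representations vanish on regular elliptic elements, and use that the character of a square-integrable representation does not vanish there. The only difference is cosmetic: for the nonvanishing step the paper simply cites \cite[Proposition 2.1.c]{Ar}, whereas you invoke the orthogonality relations (and, as a backup for positive characteristic, pseudocoefficients); these are all standard routes to the same fact.
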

\begin{proof}
By assumption there exists a $\chi \in X_\nr (G)$ such that $\sigma := \pi \otimes \chi$ is
square-integrable modulo centre, and in particular tempered. The subgroup
$K_{\mathrm{ind}}(\mcR_G )$ is stable under tensoring by unramified characters of
$G$, so it suffices to prove that $\sigma$ does not lie in $K_{\mathrm{ind}}(\mcR_G )$.

Recall that an element $g \in G$ is called regular elliptic if $Z_G (g) / Z(G)$ is an
anisotropic torus. This is equivalent to requiring that $g$ does not belong to any
proper parabolic subgroup $P = LU$ of $G$.
For any finite length $\C L$-representation $\rho$, the Harish-Chandra character
$\chi_{I_P^G (\rho)}$ is supported on the set of elements of $G$ that are conjugate to
an element of $P$. Therefore $\chi_{I_P^G (\rho)}(g) = 0$ for any regular elliptic $g \in G$.
This also follows from \cite[Proposition 4.1]{KaVa}. Thus the character of any virtual
representation in $K_{\mathrm{ind}}(\mcR_G)$ vanishes at any regular elliptic element $g$.

According to \cite[Proposition 2.1.c]{Ar}, the square-integrable modulo centre representation
$\sigma$ is elliptic, in the sense that $\chi_\sigma (g) \neq 0$ for some regular elliptic
element $g \in G$. Therefore $\sigma$ does not belong to $K_{\mathrm{ind}}(\mcR_G)$.
\end{proof}

Recall that in our definitions square-integrability includes irreduciblity.
The set of square-integrable $\C G$-representations is countable (but empty when 
$Z(G)$ is not compact). To get from
there to finite sets of representations, one may involve compact open subgroups.

\begin{theorem}\label{thm:4.5}
Fix a compact open subgroup $K \subset G$.
\begin{enumerate}
\item The set of essentially square-integrable $\C G$-representations
$(\pi ,V)$ with $V^K \neq 0$ (considered up to isomorphism) forms a union
of finitely many $X_\nr (G)$-orbits.
\item There are only finitely many non-isomorphic square-integrable
$\C G^1$-representations $(\rho,W)$ with $W^K \neq 0$.
\end{enumerate}
\end{theorem}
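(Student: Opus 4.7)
The strategy is to prove part (2) first and then derive part (1) via Lemma \ref{lem:4.1} and Clifford theory.

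For part (2), I use that an irreducible $\C G^1$-representation is square-integrable if and only if it is compact \cite[Th\'eor\`eme VI.2.1]{Ren}, equivalently cuspidal. Every irreducible cuspidal $\C G^1$-representation $\rho$ with $\rho^K \neq 0$ arises as a $G^1$-summand of the restriction to $G^1$ of some irreducible cuspidal $\C G$-representation $\tilde{\rho}$ (by \cite[\S 2]{Tad}), and then automatically $\tilde{\rho}^K \neq 0$. Bernstein's finiteness theorem leaves only finitely many inertial equivalence classes of cuspidal $\C G$-representations with $K$-fixed vectors; moreover, because unramified characters of $G$ are trivial on $G^1$, all members of one inertial class have the same $G^1$-restriction, which decomposes into finitely many irreducibles \cite{Sil}. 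Combining these observations yields the claim.

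For part (1), let $\pi \in \Irr(\C G)$ be essentially square-integrable with $V^K \neq 0$. Replacing $K$ by $K \cap G^1$, I may assume $K \subset G^1$. Lemma \ref{lem:4.1} provides $\chi_0 \in X_\nr(G)$ such that $\pi \otimes \chi_0$ is square-integrable modulo centre; since $\chi_0$ is trivial on $G^1 \supset K$, we still have $(V \otimes \chi_0)^K \neq 0$. The restriction $(\pi \otimes \chi_0)|_{G^1}$ is a finite direct sum of irreducible square-integrable $\C G^1$-representations, and at least one summand $\rho$ satisfies $\rho^K \neq 0$; by part (2), $\rho$ lies in a finite set of isomorphism classes. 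For each fixed $\rho$, Clifford theory over the abelian quotient $G/G^1$ shows that the set of $\pi' \in \Irr(\C G)$ whose $G^1$-restriction contains $\rho$ is a single $X_\nr(G)$-orbit: any two such $\pi'$ are constituents of a common induction from $\rho$, and differ by twist by a character of $G/G^1$, which is precisely an unramified character of $G$.

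The main obstacle is the finiteness assertion in part (2), which relies on Bernstein's theorem for $G$. Once that is secured, Lemma \ref{lem:4.1}, the restriction to $G^1$, and the Clifford-theoretic analysis combine routinely to give part (1).
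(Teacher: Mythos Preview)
Your argument for part (2) has a genuine gap. You assert that an irreducible $\C G^1$-representation is square-integrable if and only if it is compact, equivalently cuspidal, citing \cite[Th\'eor\`eme VI.2.1]{Ren}. That reference only gives the equivalence \emph{cuspidal $\Leftrightarrow$ compact}; it does not say that square-integrable implies compact. In fact this implication is false. When $\mcG$ is semisimple one has $G^1 = G$, and then the Steinberg representation of $G$ (for instance of $SL_2(F)$ or $PGL_2(F)$) is square-integrable but not cuspidal: its matrix coefficients lie in $L^2(G)$ without being compactly supported. Hence your reduction of (2) to a count of cuspidal $\C G$-representations with $K$-fixed vectors misses all non-cuspidal discrete series, and the argument does not go through.

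Since your proof of part (1) is built on part (2), the whole scheme collapses at this point. (Your Clifford-theoretic step deriving (1) from (2) is fine: because $G/G^1$ is free abelian, any two irreducible $\C G$-representations whose restriction to $G^1$ contains a fixed $\rho$ differ by a character of $G/G^1$, i.e.\ by an element of $X_\nr(G)$.)

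The paper proceeds in the opposite order. It proves (1) first, without any reduction to cuspidals: by Bernstein's finiteness only finitely many Bernstein blocks meet the $K$-spherical representations, and within a single block one invokes Lemma \ref{lem:4.6} to see that essentially square-integrable representations are \emph{discrete} in the sense of \cite{BDK}, whence \cite[Proposition 3.1]{BDK} gives finitely many $X_\nr(G)$-orbits. Part (2) is then deduced from (1) by extending $\rho$ trivially across $\mZ^d_G$ to a representation of $G^1 Z(G)$, inducing to $G$, and using that each resulting essentially square-integrable constituent lies in one of the finitely many orbits from (1). The key input you are missing is a finiteness statement for \emph{all} discrete series in a Bernstein block, which is exactly what \cite{BDK} supplies.
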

\begin{proof}
(1) By \cite[Th\'eor\`eme VI.10.6]{Ren} there are only finitely many Bernstein
components in $\Irr (\C G)$ containing representations with nonzero $K$-fixed
vectors. Therefore we may restrict to essentially square-integrable
$G$-repre\-sen\-tations in a single Bernstein component.

In Lemma \ref{lem:4.6} we saw that no essentially square-integrable
$\C G$-repre\-sen\-tation $\pi$ lies in $K_{\mathrm{ind}}(\mcR_G)$. In the
terminology of \cite{BDK}, $\pi$ is discrete. According to
\cite[Proposition 3.1]{BDK}, the discrete irreducible $\C G$-repre\-sen\-ta\-tions
in one Bernstein component form a finite union of $X_\nr (G)$-orbits. That remains
true if we impose the additional condition of essential square-integrability.\\
(2) Let $(\rho,W)$ be a $\C G^1$-representation as in the statement.
Let $\mZ^d_G$ be as in \eqref{eq:3.20}.
By \eqref{eq:3.21} we can extend $(\rho,W)$ to a $G^1 Z(G)$-representation $\rho_Z$ 
by defining it to be trivial on $\mZ^d_G$. Since $\rho$ is square-integrable, 
$\rho_Z$ is square-integrable modulo centre.

As $[G : G^1 Z(G)]$ is finite, the $\C G$-representation $\mathrm{ind}_{G^1 Z(G)}^G
\rho_Z$ has finite length, and all its irreducible constituents
are square-integrable modulo centre. By Frobenius reciprocity $\rho$ is
a constituent of $\mathrm{Res}^G_{G^1} \omega$, for an irreducible constituent
$\omega$ of $\mathrm{ind}_{G^1 Z(G)}^G \rho_Z$ with nonzero $K$-fixed vectors.
From part (1) we know that there are only finitely many possibilities for
$\omega$, up to tensoring by unramified characters and up to isomorphism.
By \cite{Sil1}, $\mathrm{Res}^G_{G^1} \omega$ is a finite direct sum of
irreducible representations. As $\mathrm{Res}^G_{G^1} (\omega \otimes \chi) =
\mathrm{Res}^G_{G^1} \omega$ for $\chi \in X_\nr (G)$, there are only finitely
many possible $\rho$ (up to isomorphism).
\end{proof}

The next result generalizes Proposition \ref{prop:3.13}.

\begin{theorem}\label{thm:4.7}
Let $\pi \in \Irr (\C G)$ be essentially square-integrable.
\begin{enumerate}
\item $\pi$ can be realized over $\OQ$ if and only if cc$(\pi)$ takes values in $\OQ^\times$.
\item The $X_\nr (G)$-orbit of $\pi$ contains a $\pi' \in \Irr (\C G)$ such that the image of 
cc$(\pi')$ is finite subgroup of $\{ z \in \OQ^\times : |z| = 1\}$. Such a $\pi'$ can be
realized over $\OQ$ and is unitary and square-integrable (modulo centre).
\end{enumerate}
\end{theorem}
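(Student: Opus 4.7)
The plan is to prove Part 2's existence and square-integrability assertions first (using Lemmas \ref{lem:finim} and \ref{lem:4.1}), then use them to prove Part 1, and finally close Part 2's realizability clause by applying Part 1. The main obstacle is the last step of Part 1, where I realize a well-chosen unramified twist $\pi'$ of $\pi$ over $\OQ$ by reducing to the cuspidal case via Propositions \ref{prop:3.14} and \ref{prop:3.13}.(2).

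For Part 2, the existence of $\pi'$ is immediate from Lemma \ref{lem:finim}: pick $\chi \in X_\nr(G)$ with $\mathrm{cc}(\pi \otimes \chi)$ of finite image in $\C^\times$ and set $\pi' = \pi \otimes \chi$; every finite subgroup of $\C^\times$ lies in $\{z \in \OQ^\times : |z| = 1\}$. To show $\pi'$ is square-integrable modulo centre, use Lemma \ref{lem:4.1} to pick $\chi_0 \in X_\nr(G)$ making $\pi \otimes \chi_0$ square-integrable modulo centre, so that $\mathrm{cc}(\pi \otimes \chi_0)$ is unitary. Then $(\chi/\chi_0)|_{Z(G)}$ is unitary as a ratio of unitary characters, so $|\chi/\chi_0|$ is an $\RR_{>0}$-valued unramified character trivial on both $G^1$ and $Z(G)$; it therefore factors through the finite quotient $G/(G^1 Z(G))$ (cf.\ \eqref{eq:3.21}). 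Since any homomorphism from a finite group to $\RR_{>0}$ is trivial, $\chi/\chi_0$ is unitary on all of $G$; as tensoring by a unitary character preserves square-integrability modulo centre, $\pi' = (\pi \otimes \chi_0) \otimes (\chi/\chi_0)$ is itself square-integrable modulo centre.

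Part 1's forward direction is immediate from Theorem \ref{thm:3.4}. For the backward direction, assume $\mathrm{cc}(\pi) \in \OQ^\times$ and take $\chi$ and $\pi'$ from the Part 2 construction. I first verify that $\chi$ is $\OQ^\times$-valued on all of $G$: we have $\chi|_{Z(G)} = \mathrm{cc}(\pi')/\mathrm{cc}(\pi) \in \OQ^\times$, and for any $g \in G$ the power $g^n$ lies in $G^1 Z(G)$ where $n = [G : G^1 Z(G)] < \infty$, so $\chi(g)^n = \chi(z) \in \OQ^\times$ for a suitable $z \in Z(G)$; hence $\chi(g) \in \OQ^\times$ by algebraic closedness of $\OQ$. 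Therefore $\pi = \pi' \otimes \chi^{-1}$ can be realized over $\OQ$ if and only if $\pi'$ can. The remaining task, which is the main obstacle, is to realize $\pi'$---a square-integrable (modulo centre) representation whose central character has finite image in $\OQ^\times$---over $\OQ$. My strategy is to apply Proposition \ref{prop:3.14} to reduce to the cuspidal support $(L, \sigma)$ of $\pi'$, then Proposition \ref{prop:3.13}.(2) to reduce further to checking $\mathrm{cc}(\sigma) \in \OQ^\times$. The character $\mathrm{cc}(\sigma)$ agrees with $\mathrm{cc}(\pi')$ on $Z(G) \subset Z(L)$ and has finite image on the maximal compact part $Z(L)^1$; the delicate point is to analyze $\mathrm{cc}(\sigma)$ on the remaining free abelian directions of $Z(L)/Z(G)$, where one must combine Casselman-type constraints from square-integrability modulo centre (which control the absolute values by rational powers of $q_F$) with the finite-image property of $\mathrm{cc}(\pi')$ to conclude that the values are algebraic. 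This analysis is the principal technical challenge. Once it is completed, Part 2's realizability clause follows by applying Part 1 to $\pi'$.
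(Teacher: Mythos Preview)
Your reduction is sound up to the last step of Part~1: you correctly isolate the problem as showing that the cuspidal support $(L,\sigma)$ of a suitable twist $\pi'$ has $\mathrm{cc}(\sigma)$ valued in $\OQ^\times$, and you correctly locate the difficulty in the free part of $Z(L)$ modulo $Z(G)$. But the method you propose for that step does not work. Casselman's square-integrability criterion gives only \emph{open inequalities} on the absolute values of the exponents (they must lie strictly inside a negative chamber); it does not pin those absolute values to rational powers of $q_F$, and it says nothing whatsoever about the arguments. A priori an exponent could be $q_F^{-s}e^{i\theta}$ with $s>0$ irrational and $\theta$ transcendental, and Casselman would not object. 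The finite-image hypothesis on $\mathrm{cc}(\pi')$ controls only $Z(G)$, not these extra directions in $Z(L)$, so there is nothing to combine it with.

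The paper closes this gap by a different mechanism. Write the cuspidal support of $\pi$ as $(L,\sigma_0\otimes\chi)$ with $\sigma_0$ cuspidal and already realized over $\OQ$ (Proposition~\ref{prop:3.13} plus Lemma~\ref{lem:finim}) and $\chi\in X_\nr(L)$; by Proposition~\ref{prop:3.14} it suffices to show $\chi$ is $\OQ^\times$-valued. For each $\gamma\in\Gal(\C/\OQ)$ the conjugate $\pi^\gamma$ has cuspidal support $(L,\sigma_0\otimes\chi^\gamma)$ and is again \emph{discrete} in the sense of \cite{BDK} (membership in $K_{\mathrm{ind}}(\mcR_G)$ is Galois-stable). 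By \cite[Proposition~3.1]{BDK}, discrete irreducible representations in a fixed Bernstein block form finitely many $X_\nr(G)$-orbits; hence the set $\{\chi^\gamma|_{L\cap G^1}:\gamma\in\Gal(\C/\OQ)\}$ is finite, which forces $\chi|_{L\cap G^1}$ to take values in $\OQ^\times$. Combined with the control on $Z(G)$ (your argument there is fine) and $[G:G^1 Z(G)]<\infty$, this gives $\chi(L)\subset\OQ^\times$. The missing ingredient is thus the Galois-orbit finiteness coming from \cite{BDK}, not a Casselman-type inequality.
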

\begin{proof}
(1) The condition on the central character is clearly necessary.

Let $(\pi,V) \in \Irr (\C G)$ with $cc(\pi) : Z(G) \to \OQ^\times$.
By Proposition \ref{prop:3.13}, there exists a Levi subgroup $L\subset G$,
a cuspidal $\OQ L$-representation $\sigma$ and an unramified character $\chi \in
X_\nr (L)$ such that $\pi$ is an irreducible subquotient of $I_P^G (\sigma\otimes\chi)$.
By Proposition \ref{prop:3.14}, it suffices to show
that the character $\chi$ takes values in $\OQ^\times$.

For every $\gamma\in \mathrm{Gal}(\C / \OQ)$ the conjugate $\pi^{\gamma}$ is an
irreducible subquotient of $I_P^G (\sigma\otimes\chi^{\gamma})$. By Theorem
\ref{thm:4.5}.(1) all conjugates $\pi^\gamma$ lie in a finite number of
$X_\nr (G)$-orbits. With the uniqueness of the cuspidal support (up to
$G$-conjugation), it follows that all conjugates $\chi^{\gamma}$ of $\chi$
belong to a finite union of $X_\nr (G)$-orbits. Therefore the set of
conjugates  $\chi^\gamma |_{L\cap G^1}$ is finite, and thus the restriction
$\chi|_{L\cap G^1}$ has values in $\OQ^{\times}$.

By assumption, the central character $cc(\pi) = cc(\sigma)|_{Z(G)} \cdot
\chi|_{Z(G)}$ takes values in $\OQ^\times$. Since $\sigma$ is defined over $\OQ$,
the central character $cc(\sigma)|_{Z(G)}$ has values in $\OQ^\times$.

Thus, the restriction $\chi|_{Z(G)}$ and hence also $\chi|_{L\cap G^1 Z(G)}$
takes values in $\OQ^\times$. Since $[G:G^1 Z(G)] < \infty$, we conclude that
$\chi$ has values in $\OQ^\times$. \\
(2) By Lemma \ref{lem:finim}, there exists 
a $\chi\in X_\nr (G)$ such that the image of the central character of $\pi':= \pi \otimes \chi$ is a finite subgroup of $\{ z \in \OQ^\times : |z| = 1 \}$. By part (1), $\pi'$ can be
realized over $\OQ$. Further, since cc$(\pi')$ is unitary, we conclude that $\pi'$ is unitary and square-integrable (modulo centre).
\end{proof}

\subsection{Clozel's result about the Galois action}\label{par:Clozel} \

L. Clozel has shown  that the action of Gal($\C / \mQ)$ on $\Irr (\C G)$
preserves the discrete series. We thank him for sharing his unpublished proof 
with us. Clozel's argument uses adelic groups and automorphic representations,
and it is not clear to us how it can be made precise. Instead we worked out
a different proof, based on similar ideas but without adelic groups.

\begin{theorem}\label{Cl}
Let $(\pi ,V)$ be an essentially square-integrable $\C G$-repre\-sen\-tation and let
$\Gg \in \Gal (\mC /\mQ )$.
Then the representation $(\pi ^\Gg, V)$ is essentially square-integrable.
\end{theorem}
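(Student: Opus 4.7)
The strategy is to reduce $\pi$ to a unitary representation realizable over $\OQ$ and then invoke an algebraic/analytic characterization of square-integrability that is $\Gal(\C/\mQ)$-equivariant.

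By Theorem \ref{thm:4.7}.(2), choose $\chi \in X_\nr(G)$ so that $\pi' := \pi \otimes \chi$ is unitary, square-integrable modulo centre, realizable over $\OQ$, and has central character of finite order (taking values in roots of unity). Since $(\pi \otimes \chi)^\gamma = \pi^\gamma \otimes \chi^\gamma$ and $\chi^\gamma \in X_\nr(G)$, Lemma \ref{lem:4.1} reduces the claim to showing that $(\pi')^\gamma$ is essentially square-integrable. Moreover, by Theorem \ref{thm:3.15}, $\pi'$ is fixed by $\Gal(\C/\OQ)$, so the isomorphism class of $(\pi')^\gamma$ depends only on the restriction $\gamma|_{\OQ} \in \Gal(\OQ/\mQ)$; in particular $(\pi')^\gamma$ is itself realizable over $\OQ$, and its central character $\gamma \circ \mathrm{cc}(\pi')$ again takes values in roots of unity, hence is unitary.

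For the main step I would investigate the formal degree of $\pi'$ and its Galois conjugate, equivalently a pole of Harish-Chandra's $\mu$-function on the torus $X_\nr(L)$ attached to the Bernstein component $\mathfrak{s} = [L,\sigma]_G$ of $\pi'$. A unitary tempered irreducible representation in $\mathfrak{s}$ is square-integrable modulo centre precisely when $\mu^G_{\mathfrak{s}}$ has a pole of maximal order at the corresponding unramified-character parameter. Since $\mu^G_{\mathfrak{s}}$ is a rational function with algebraic coefficients, the Galois action intertwines it with $\mu^G_{\mathfrak{s}^\gamma}$, where $\mathfrak{s}^\gamma = [L,\sigma^\gamma]_G$, and this intertwining permutes their maximal-order poles. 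Hence $(\pi')^\gamma$ lies at such a pole of $\mu^G_{\mathfrak{s}^\gamma}$, and is therefore square-integrable modulo centre — essentially square-integrable, as required. The finiteness statement of Theorem \ref{thm:4.5}.(1), applied to any compact open $K$ with $(V')^K \neq 0$ (so that $((V')^\gamma)^K \neq 0$ as well), locates $(\pi')^\gamma$ inside the finite set of $X_\nr(G)$-orbits of essentially square-integrable representations with $K$-fixed vectors and serves as an independent sanity check.

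The main obstacle is supplying the analytic input: verifying the algebraicity of the Harish-Chandra $\mu$-function (equivalently, of the formal degree of a unitary square-integrable-modulo-centre representation) and its covariance under $\Gal(\C/\mQ)$, uniformly in the residue characteristic of $F$. A secondary obstacle is to ensure that $(\pi')^\gamma$ is tempered, so that the pole-of-$\mu$ characterization applies to it; this may require either a separate argument for stability of temperedness along $\Gal(\C/\OQ)$ for representations realizable over $\OQ$, or a characterization of essential square-integrability that bypasses temperedness altogether. The algebraic reduction in the first paragraph is essentially formal from earlier results; the real content of Theorem \ref{Cl} lies in transporting a nonvanishing analytic invariant of $\pi'$ to a nonvanishing invariant of $(\pi')^\gamma$.
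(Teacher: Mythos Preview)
Your reduction via Theorem \ref{thm:4.7}.(2) to a $\OQ$-realizable square-integrable $\pi'$ with finite-order central character is sound and genuinely different from the paper's reduction, which goes through the derived group $G_\der$ rather than through rationality. However, the heart of your proposal---the $\mu$-function criterion---has a real gap. The statement ``unitary tempered $\pi$ is square-integrable iff $\mu^G_{\mathfrak s}$ has a pole of maximal order at the corresponding parameter'' is not quite a theorem: what is true (Heiermann, Opdam) is that discrete series correspond to \emph{residual points} of $\mu$, a more delicate notion involving the pole structure along several coordinate directions simultaneously. Even granting a correct formulation, you would still need (i) the Galois-covariance of $\mu$, which involves the twist by $\chi_-^{\epsilon(\gamma)}$ appearing later in the paper as \eqref{eq:5.8}, and (ii) a way to identify which irreducible subquotient of $I_P^G(\sigma^\gamma \otimes \chi^\gamma \otimes \chi_-^{\epsilon(\gamma)})$ is $(\pi')^\gamma$, since a residual point of $\mu$ only tells you that \emph{some} constituent is discrete series. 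You flag (i) and the temperedness issue honestly, but (ii) is a further obstruction you have not addressed.

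The paper avoids all of this by working directly with the Plancherel mass $\mu_{Pl}(\{\pi\})$ rather than $\mu$. For semisimple $G$ over a $p$-adic field it invokes Borel--Harder to produce a tower of cocompact lattices $\Gamma_n$, then Sauvageot's density theorem to express $\mu_{Pl}(\{\pi\})$ as a limit of $\mathrm{vol}(\Gamma_n\backslash G)^{-1}\dim\Hom_{\C G}(\pi, C^\infty(\Gamma_n\backslash G))$. Since $C^\infty(\Gamma_n\backslash G)$ has an obvious $\mQ$-structure, each term in the limit is $\gamma$-invariant, hence $\mu_{Pl}(\{\pi^\gamma\}) = \mu_{Pl}(\{\pi\}) > 0$. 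For semisimple $G$ in positive characteristic (where cocompact lattices are not known to exist) it transfers to characteristic zero via the Deligne--Kazhdan--Ganapathy method of close local fields. The payoff is that no temperedness hypothesis on $\pi^\gamma$ is needed, and no subquotient has to be singled out: positivity of the Plancherel mass is the characterization. Your $\mu$-function route might be made to work with substantial additional input, but as written it is a sketch with acknowledged holes rather than a proof.
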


The proof of Theorem \ref{Cl} occupies the remainder of this paragraph.
Important tools to study square-integrable representations are formal degrees and the 
Plancherel measure, which exist for any locally compact unimodular group of type I.
Let $A_G$ be the maximal $F$-split torus in $Z(G)$.

\begin{theorem}\label{thm:formalDegree} 
\textup{\cite[Th\'eor\`eme 14.3.3]{Dix} and \cite[p. 265]{Wal}}  \\
Let $(\pi ,V)$ be an irreducible square-integrable modulo centre $G$-representation.
Then there exists a unique Haar measure $\mu_\pi$ on $G / A_G$ such that
\[
\int_{g\in G / A_G} |f_v (g)|^2 \textup{d}\mu_\pi (g) = |v|^4 \quad
\text{for any } v\in V .
\]
\end{theorem}

We fix a Haar measure $\mu_\mQ$ on $G / A_G$ such that $\mu_\mQ (K' ) \in 
\mQ_{>0}$ for every compact open subgroup $K'$ of $G / A_G$. We define the formal 
degree of an irreducible square-integrable representation $\pi$ as 
\[
\fdeg (\pi) = \frac{\textup{d}\mu_\pi}{\textup{d}\mu_\mQ}.
\]
Notice that $\fdeg (\pi)$ is invariant under twisting $\pi$ by unitary unramified 
characters of $G$. That allows us to define the formal degree of an essentially
square-integrable $\C G$-representation $\pi'$ as
\[
\fdeg (\pi') = \fdeg (\pi' \otimes \chi) \text{ for any } \chi \in X_\nr (G) 
\text{ such that } \pi' \otimes \chi \text{ is unitary.}
\]
If an irreducible representation of $G$ is not essentially square-integrable, 
then we define its formal degree to be zero.
The following result will be useful in the next section.

\begin{theorem}\label{thm:4.8} \textup{\cite{Mey}} \\
The formal degree of any essentially square-integrable $G$-representation 
$\pi$ lies in $\mQ_{>0}$. 
\end{theorem}
\begin{proof}
We may assume that $\pi$ has a unitary central character, so that it is square-integrable 
modulo centre. According to \cite[Lemma 36 and Theorem 38]{Mey}, $\fdeg (\pi) \in \mQ_{>0}$.
However, in \cite{Mey} another definition of formal degrees is used, which is equivalent to
the definition from Theorem \ref{thm:formalDegree} with $A_G$ replaced by $Z^\circ (G)$ (where 
$\circ$ means the connected component as algebraic group). But, as $A_G$ is cocompact in 
$Z^\circ (G)$, all the results from \cite{Mey} also work with $A_G$ instead of $Z^\circ (G)$. 
\end{proof}

When $G$ is semisimple or more generally when $Z(G)$ is compact, formal degrees 
also arise from the Plancherel measure. This measure $\mu_{Pl}$ is usually defined 
via unitary $G$-representations which are not smooth.
We recall from \cite[(95)]{SolQ} that there is a canonical bijection between
the set of irreducible unitary $G$-representations on Hilbert spaces and the set
of irreducible smooth unitary $G$-representations (both considered up to isomorphism).
In one direction the functor is completion, in the opposite direction the functor is
taking smooth vectors. Therefore we may replace unitary $G$-representations by
smooth unitary $G$-representations for $\mu_{Pl}$. Further, we may
extend the Plancherel measure to $\Irr (\C G)$ by defining it to be supported 
on the unitary representations in $\Irr (\C G)$. That leads to:

\begin{theorem}\textup{\cite[\S 18.8]{Dix}} 
\label{thm:Plancherel} \\
Assume that $Z(G)$ is compact. There exists a unique measure $\mu_{Pl}$ on 
$\Irr (\C G)$ which is supported on unitary representations and satisfies
\[
f(1) = \int_{\Irr (\C G)} \mathrm{tr} \, \pi (f \mu_\mQ ) \, 
\textup{d} \mu_{Pl} (\pi) \qquad \text{for all } f \mu_\mQ \in \mcH_G . 
\]
An irreducible $\C G$-representation $\pi$ is square-integrable if and only if\\ 
$\mu_{Pl}(\{\pi\}) > 0$. Moreover $\fdeg (\pi) = \mu_{Pl}(\{\pi\})$.
\end{theorem}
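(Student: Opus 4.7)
Both statements are classical parts of the Plancherel theorem for type~I, second countable, locally compact unimodular groups, specialized to the reductive $p$-adic group $G$. The plan is to invoke \cite[\S 18.8]{Dix} for the unitary dual $\widehat{G}$ (viewed as Hilbert-space representations), and then to transport the resulting measure to $\Irr(\C G)$ via the bijection recalled in the paragraph preceding the theorem, which identifies irreducible unitary Hilbert $G$-representations with irreducible smooth unitary $G$-representations by taking smooth vectors in one direction and Hilbert completion in the other.

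For part (1), the first step is to produce, by \cite[\S 18.8]{Dix}, the unique Borel measure $\widehat{\mu}_{Pl}$ on $\widehat{G}$ satisfying the trace Plancherel formula on a suitable dense $\ast$-subalgebra of $L^1(G)\cap L^2(G)$. Every element $f\mu_\mQ\in\mcH_G$ is compactly supported and bi-$K$-invariant for some compact open $K\subset G$, so for any admissible representation $(\pi,V)$ the operator $\pi(f\mu_\mQ)$ factors through the finite-dimensional space $V^K$; in particular $\mathrm{tr}\,\pi(f\mu_\mQ)$ is well-defined and bounded on $\widehat{G}$, so the Plancherel identity extends from the dense subalgebra to all of $\mcH_G$ by continuity. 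Uniqueness is immediate: two such measures would assign equal integrals to every element of $\mcH_G$, and $\mcH_G$ separates isomorphism classes of irreducible admissible representations through their distributional characters. Pushing $\widehat{\mu}_{Pl}$ forward along the bijection with smooth unitary representations and extending by zero on the non-unitary part of $\Irr(\C G)$ yields the desired $\mu_{Pl}$.

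For part (2), the compactness of $Z(G)$ is essential so that square-integrability on $G/Z(G)$ agrees with square-integrability on $G$; then Schur orthogonality gives every square-integrable $\pi$ an intrinsic positive formal degree $d(\pi)>0$, normalized with respect to $\mu_\mQ$. The standard structural fact from \cite[\S 18.8]{Dix} is that the atomic part of $\widehat{\mu}_{Pl}$ is concentrated precisely on those irreducibles which appear as closed invariant subspaces of $L^2(G,\mu_\mQ)$, with $\widehat{\mu}_{Pl}(\{\pi\})=d(\pi)$. After transporting back to $\Irr(\C G)$ as above, this yields the equivalence $\mu_{Pl}(\{\pi\})>0\Leftrightarrow \pi$ is square-integrable.

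The main obstacle in writing out a proof from scratch would be the careful bookkeeping of the direct-integral decomposition of the regular representation and the associated measurability arguments underlying the non-commutative Plancherel theorem; since this is exactly what is done in \cite[\S 18.8]{Dix} in the required generality, the proof reduces to verifying compatibility with our normalization of $\mu_\mQ$ and with the identification of smooth unitary representations with their Hilbert-space completions.
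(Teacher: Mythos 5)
Your proposal is correct and follows essentially the same route as the paper: the paper simply cites \cite[\S 18.8]{Dix} for the Plancherel theorem on the unitary dual and, in the paragraph preceding the theorem, transports the measure to $\Irr(\C G)$ via the smooth-vectors/completion bijection and extends it by zero off the unitary representations, exactly as you do. Your additional remarks on why $\mathrm{tr}\,\pi(f\mu_\mQ)$ is well defined for $f\mu_\mQ\in\mcH_G$ and on the atomic part of the measure being the discrete series with their formal degrees are consistent with the intended reading of the cited result.
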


We focus first on the cases of Theorem \ref{Cl} where $G$ is semisimple and the underlying
field $F$ is $p$-adic (so of characteristic zero). By \cite[Remark 1.13]{BoHa} there
exists a number field $\mE$ such that all infinite places of $\mE$ are real and
$\mE_v \cong F$ for a finite place $v$. Let $\mcV_\mE$ be the set of places
of $\mE$ and let $\mcV_\mE^\infty$ be the subset of infinite places. Recall that
any semisimple real group has a compact form. By \cite[Theorem B]{BoHa}, there exists
an $\mE$-group $\mcG$ such that $\mcG (\mE_v) \cong \mc G (F) \cong G$ and such that
$\mcG (\mE_w)$ is compact for every $w \in \mcV_\mE^\infty$.

\begin{theorem}\label{thm:4.2} 
\textup{\cite[Theorem A and page 60]{BoHa}} \\
Let $\Gamma \subset \mcG (\mE)$ be an $S$-arithmetic subgroup, where 
$S = \mcV_\mE^\infty \cup \{v\}$. Then $\Gamma$ is a discrete cocompact subgroup
of the semisimple group $\mcG (\mE_v)$ over the $p$-adic field $\mE_v \cong F$.
\end{theorem}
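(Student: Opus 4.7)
The plan is to deduce the result from the reduction theory of $S$-arithmetic groups (Borel--Harish-Chandra, extended to the $S$-arithmetic setting by Behr) combined with Godement's compactness criterion applied to the $\mE$-group $\mcG$. Set $K_\infty := \prod_{w \in \mcV_\mE^\infty}\mcG(\mE_w)$, which is compact by the construction of $\mcG$ in \cite{BoHa}, so that the ambient group of the diagonal embedding of $\Gamma$ factors as $\prod_{s \in S} \mcG(\mE_s) = K_\infty \times \mcG(\mE_v)$. The $S$-arithmetic Borel--Harish-Chandra theorem realizes $\Gamma$ as a discrete subgroup of finite covolume in this product, and the projection $p_v : \Gamma \to \mcG(\mE_v)$ is injective since $\mcG(\mE) \hookrightarrow \mcG(\mE_v)$ is.

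For discreteness of $p_v(\Gamma)$ in $\mcG(\mE_v)$, the argument is formal: if $p_v(\gamma_n) \to 1$ with $\gamma_n = (k_n, p_v(\gamma_n)) \in \Gamma$, compactness of $K_\infty$ extracts a subsequence with $k_{n_j} \to k$, so $\gamma_{n_j} \to (k,1)$ inside $K_\infty \times \mcG(\mE_v)$; discreteness of $\Gamma$ forces this sequence to be eventually constant, and injectivity of $p_v$ then yields $p_v(\gamma_{n_j}) = 1$ eventually. The heart of the argument is cocompactness. Here I would first observe that $\mcG$ is $\mE$-anisotropic, for any nontrivial $\mE$-split torus $T \subset \mcG$ would produce $T(\mE_w) \cong (\RR^\times)^{\dim T}$ as a non-compact subgroup of the supposedly compact $\mcG(\mE_w)$ at any real place $w$. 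With $\mcG$ semisimple and $\mE$-anisotropic, Godement's compactness criterion (in its $S$-arithmetic form) gives that $\Gamma \backslash \prod_{s \in S}\mcG(\mE_s)$ is compact, and dividing by the compact factor $K_\infty$ then delivers compactness of $\Gamma \backslash \mcG(\mE_v)$.

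The main obstacle will be the cocompactness step: Godement's criterion is classically stated adelically, and descending it cleanly to the $S$-arithmetic quotient requires some care. That said, this is precisely the content packaged in the relevant portions of \cite{BoHa}, and once one takes it as input the remaining ingredients---the discrete embedding, injectivity of $p_v$, and reduction by the compact factor $K_\infty$---are essentially formal.
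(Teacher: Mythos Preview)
Your argument is correct and is essentially the standard one: embed $\Gamma$ diagonally in $K_\infty \times \mcG(\mE_v)$, use that $\mcG$ is $\mE$-anisotropic (forced by compactness at the archimedean places) together with Godement's criterion to get cocompactness of the $S$-arithmetic quotient, and then project past the compact factor $K_\infty$ to conclude for $\mcG(\mE_v)$ alone. The paper itself gives no proof at all---Theorem~\ref{thm:4.2} is simply quoted from \cite[Theorem~A and p.~60]{BoHa}---so there is nothing to compare beyond noting that your sketch is precisely the mechanism underlying the cited result in \cite{BoHa}.

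One small remark on the final step: the passage from compactness of $\Gamma \backslash \big(K_\infty \times \mcG(\mE_v)\big)$ to compactness of $\Gamma \backslash \mcG(\mE_v)$ is cleanest phrased as the observation that the second projection induces a continuous surjection of the former quotient onto the latter, rather than as ``dividing by $K_\infty$'' (since $K_\infty$ does not act on $\Gamma \backslash \mcG(\mE_v)$). This is presumably what you meant, but it is worth stating explicitly.
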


By varying $\Gamma$ in Theorem \ref{thm:4.2}, we can construct a sequence
$(\Gamma_n )_{n=0}^\infty$ such that:
\begin{itemize}
\item each $\Gamma_n$ is a discrete cocompact subgroup of $\mcG (\mE_v) \cong G$,
\item $\Gamma_{n+1} \subset \Gamma_n$ and $\bigcap_n \Gamma_n = \{1\}$,
\item each $\Gamma_n$ is normal in $\Gamma_0$.
\end{itemize}
For example if $\mcG$ is defined over the ring of integers $\mathfrak o_\mE$, we can take
\[
\Gamma_0 = \mcG (\mathfrak o_{\mE,S}) \;\subset\; \bigcap\nolimits_{w \in S} \mcG (\mE_w) 
\cap \bigcap\nolimits_{w \in \mcV_\mE \setminus S} \mcG (\mathfrak o_{\mE_w}) ,
\]
and define $\Gamma_n$ by putting more congruence conditions on the places in
$\mcV_\mE \setminus S$. For each $n \in \ZZ_{\geq 0}$ we have a unitary 
representation of $G$ on $L^2 (\Gamma_n \backslash G)$, by right translations.
The following theorem is a reformulation of a result of Sauvageot \cite{Sau}.

\begin{theorem}\label{thm:4.3}
Take $\Gamma_n$ as above.
\begin{enumerate}
\item For any irreducible unitary $G$-representation $\tilde \pi$ on a
Hilbert space:
\[
\mu_{Pl}(\{ \tilde \pi \}) = \lim_{n \to \infty} \mathrm{vol}(\Gamma_n 
\backslash G)^{-1} \dim \Hom_{\C G} (\tilde \pi, L^2 (\Gamma_n \backslash G)) .
\]
\item For any $\pi \in \Irr (\C G)$:
\[
\mu_{Pl}(\{ \pi \}) = \lim_{n \to \infty} \mathrm{vol}(\Gamma_n \backslash G)^{-1}
\dim \Hom_{\C G} (\pi, C^\infty (\Gamma_n \backslash G)) .
\]
\end{enumerate}
\end{theorem}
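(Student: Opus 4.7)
The plan is to invoke Sauvageot's limit multiplicity theorem \cite{Sau} for part (1), and then reduce part (2) to part (1) via the correspondence between smooth unitary $G$-representations and their Hilbert space completions. Part (1) is essentially the main theorem of \cite{Sau} applied to the tower $(\Gamma_n)$, once one checks that his Plancherel measure coincides with $\mu_{Pl}$ as defined in Theorem \ref{thm:Plancherel}. This matching is a comparison of normalizations: Sauvageot proves
\[
\lim_{n \to \infty} \frac{\mathrm{tr}(R(f \mu_\mQ),\, L^2 (\Gamma_n \bsl G))}{\mathrm{vol}(\Gamma_n \bsl G)} = f(1) \qquad \text{for all } f \mu_\mQ \in \mcH_G ,
\]
and from this, by testing against Hecke operators that approximate the projector onto the $\tilde\pi$-isotypic component (using admissibility of $\tilde\pi$ and pseudo-coefficients in the atomic case), one extracts the per-representation statement of (1).

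For part (2), first suppose $\pi \in \Irr(\C G)$ is not unitarizable. Then $\mu_{Pl}(\{\pi\}) = 0$ by the support condition in Theorem \ref{thm:Plancherel}. On the other hand, since $\Gamma_n \bsl G$ is compact and totally disconnected, $C^\infty(\Gamma_n \bsl G)$ is precisely the space of smooth vectors of $L^2(\Gamma_n \bsl G)$. Taking smooth vectors in the Hilbert decomposition
\[
L^2 (\Gamma_n \bsl G) = \widehat{\bigoplus}_{\tilde\sigma} m(\tilde\sigma, \Gamma_n)\, \tilde\sigma
\]
yields the algebraic direct sum $\bigoplus_{\tilde\sigma} m(\tilde\sigma, \Gamma_n)\, \tilde\sigma^\infty$, where $\tilde\sigma$ ranges over unitary irreducibles. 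Every irreducible smooth subrepresentation of $C^\infty(\Gamma_n \bsl G)$ is therefore unitarizable, so $\Hom_{\C G}(\pi, C^\infty(\Gamma_n \bsl G)) = 0$ and both sides vanish.

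If instead $\pi$ is unitarizable with Hilbert space completion $\tilde\pi$, the canonical bijection of \cite[(95)]{SolQ} between smooth unitary irreducibles and unitary Hilbert space irreducibles identifies $\pi$ with $\tilde\pi$, so $\mu_{Pl}(\{\pi\}) = \mu_{Pl}(\{\tilde\pi\})$. Using the smooth-vectors decomposition above,
\[
\dim \Hom_{\C G}(\pi, C^\infty(\Gamma_n \bsl G)) = m(\tilde\pi, \Gamma_n) = \dim \Hom_{\C G}(\tilde\pi, L^2(\Gamma_n \bsl G)),
\]
so part (2) follows from part (1).

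The main obstacle is the extraction of the per-representation limit in part (1) for those unitary $\tilde\pi$ which are not atoms of $\mu_{Pl}$ (that is, outside the square-integrable locus): the trace-formula input alone gives integrated statements, and one must rely on the sharper individual-representation form of Sauvageot's theorem to conclude that $m(\tilde\pi, \Gamma_n)/\mathrm{vol}(\Gamma_n \bsl G) \to 0$ in that case.
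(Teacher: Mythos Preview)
Your approach matches the paper's: part (1) is Sauvageot's theorem, and part (2) reduces to it by passing to smooth vectors via the bijection of \cite[(95)]{SolQ}, with the non-unitary case trivial since both sides vanish. Your detour through the integrated trace identity and the ``obstacle'' you flag for non-atomic $\tilde\pi$ are unnecessary: the paper simply cites \cite[Introduction]{Sau} for the per-representation limit directly, so there is nothing further to extract.
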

\begin{proof}
(1) See \cite[Introduction]{Sau}.\\
(2) It follows from the remarks before Theorem \ref{thm:Plancherel} that in 
part (1) we may replace both $\tilde \pi$ and 
$L^2 (\Gamma_n \backslash G)$ by their smooth parts. That does not change the 
dimension of the Hom-space because $\tilde \pi$ is irreducible. The
smooth part of $L^2 (\Gamma_n \backslash G)$ is $C^\infty (\Gamma_n \backslash G)$
and the smooth part of $\tilde \pi$ is an irreducible unitary 
representation in $\Rep (\C G)$ \cite[(95)]{SolQ}. Moreover any irreducible
unitary smooth $G$-representation can be obtained in this way, which proves 
(2) when $\pi$ is unitary. 

When $\pi$ is not unitary, $\mu_{Pl}(\{\pi\}) = 0$ by definition. 
$\Hom_{\C G}(\pi, C^\infty (\Gamma_n \backslash G))$ is also zero because all
subrepresentations of $C^\infty (\Gamma_n \backslash G)$ are unitary.
\end{proof}

The next result is the crucial part of Clozel's argument.

\begin{proposition}\label{prop:Clozel}
Let $\pi$ be a discrete series representation of a semisimple group $G$ over 
a $p$-adic field, and let $\Gg \in \Gal (\mC /\mQ )$. Then the representation 
$\pi ^\Gg$ is also a discrete series and has the same formal degree as~$\pi$.
\end{proposition}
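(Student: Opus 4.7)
The plan is to reduce the proposition to a statement about multiplicities of $\pi$ and $\pi^\gamma$ in the regular representations $C^\infty(\Gamma_n \backslash G)$, using Theorems \ref{thm:Plancherel} and \ref{thm:4.3} as the two main inputs, and to exploit an elementary Galois symmetry of those ambient spaces. Since $G$ is semisimple over a $p$-adic field, $Z(G)$ is finite hence compact, so Theorem \ref{thm:Plancherel}(2) applies and tells us that $\pi$ is a discrete series if and only if $\mu_{Pl}(\{\pi\}) > 0$, with the formal degree equal to $\mu_{Pl}(\{\pi\})$ (for the chosen Haar measure $\mu_\mQ$). Thus it suffices to prove $\mu_{Pl}(\{\pi^\gamma\}) = \mu_{Pl}(\{\pi\})$.

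First I would fix the arithmetic sequence $(\Gamma_n)_{n \geq 0}$ produced by Theorem \ref{thm:4.2} and the discussion following it, and apply Theorem \ref{thm:4.3}(2) to both $\pi$ and $\pi^\gamma$. This reduces the desired equality of Plancherel masses to the pointwise equality
\[
\dim \Hom_{\C G}\bigl(\pi, C^\infty(\Gamma_n \backslash G)\bigr) \;=\; \dim \Hom_{\C G}\bigl(\pi^\gamma, C^\infty(\Gamma_n \backslash G)\bigr)
\]
for every $n$, after which the common normalization $\mathrm{vol}(\Gamma_n \backslash G)^{-1}$ and the limit carry the identity through.

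The key step is establishing this dimension equality. The point is that post-composition with $\gamma : \C \to \C$ sends a smooth function $f : \Gamma_n \backslash G \to \C$ to the smooth function $\gamma \circ f$; since the right translation action of $G$ on $\Gamma_n \backslash G$ involves no scalars, the map $f \mapsto \gamma \circ f$ is a $\gamma$-semilinear $G$-equivariant bijection of $C^\infty(\Gamma_n \backslash G)$ to itself. Consequently, if $\phi : V \to C^\infty(\Gamma_n \backslash G)$ is $\C G$-linear, then $\gamma \circ \phi$ is $\gamma$-semilinear and $G$-equivariant, and the universal property of the scalar extension $V^\gamma = \C \otimes_{\gamma, \C} V$ promotes it to a $\C G$-linear map $\tilde\phi : \pi^\gamma \to C^\infty(\Gamma_n \backslash G)$. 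The assignment $\phi \mapsto \tilde\phi$ is a $\gamma$-semilinear bijection between the two Hom-spaces, so they share a common finite complex dimension.

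Combining the last two paragraphs gives $\mu_{Pl}(\{\pi^\gamma\}) = \mu_{Pl}(\{\pi\}) > 0$, so Theorem \ref{thm:Plancherel}(2) implies that $\pi^\gamma$ is square-integrable; the equality of Plancherel masses, for the same Haar measure $\mu_\mQ$, is exactly the equality of formal degrees. The main obstacle is making the semilinearity manipulation in the second step airtight, in particular verifying that post-composition with $\gamma$ preserves smoothness and genuinely produces a bijection of the relevant Hom-spaces after passing through the universal property defining $\pi^\gamma$; everything else is a direct appeal to the theorems already in hand.
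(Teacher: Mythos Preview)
Your proposal is correct and follows essentially the same approach as the paper: both arguments use Theorem \ref{thm:4.3}(2) to express $\mu_{Pl}(\{\pi\})$ and $\mu_{Pl}(\{\pi^\gamma\})$ as limits of normalized multiplicities in $C^\infty(\Gamma_n\backslash G)$, exploit the $\Gal(\C/\mQ)$-stability of $C^\infty(\Gamma_n\backslash G)$ (the paper phrases this as an isomorphism $C^\infty(\Gamma_n\backslash G)\to C^\infty(\Gamma_n\backslash G)^\gamma$, $f\mapsto \gamma^{-1}\circ f$, while you phrase it via the $\gamma$-semilinear bijection $f\mapsto \gamma\circ f$ and the universal property of $V^\gamma$), and then conclude via Theorem \ref{thm:Plancherel}(2). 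Your treatment of the semilinearity step is in fact a bit more explicit than the paper's, but the underlying idea is identical.
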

\begin{proof}
Note that the $\C$-vector space $C^\infty (\Gamma_n \backslash G)$ has a natural $\QQ$-structure, and 
there is an isomorphism of $\C G$-representations
\begin{equation}
\begin{array}{ccc}
C^\infty (\Gamma_n \backslash G) & \to & C^\infty (\Gamma_n \backslash G)^\gamma \\
f & \mapsto & \gamma^{-1} \circ f 
\end{array}.
\end{equation}
Hence the representation of $G$ on $C^\infty (\Gamma_n \backslash G)$ is stable under the
action of $\Gal (\C / \mQ)$. By Theorem \ref{thm:4.3}.(2) 
\begin{align*}
\mu_{Pl}(\{ \pi^\gamma \}) & = \lim_{n \to \infty} \mathrm{vol}(\Gamma_n \backslash G)^{-1}
\dim \Hom_{\C G} (\pi^\gamma , C^\infty (\Gamma_n \backslash G)) \\
& = \lim_{n \to \infty} \mathrm{vol}(\Gamma_n \backslash G)^{-1}
\dim \Hom_{\C G} (\pi , C^\infty (\Gamma_n \backslash G)) = \mu_{Pl}(\{ \pi \}) .
\end{align*}
Now Theorem \ref{thm:Plancherel} and the square-integrability of $\pi$ imply that
$\pi^\gamma$ is square-integrable.
\end{proof}

Unfortunately, Theorem \ref{thm:4.3} cannot be applied to semisimple groups $G'$ over 
local function fields, because it is not known whether such $G'$ have discrete
cocompact subgroups. To establish Theorem \ref{Cl} for such groups, 
we use the method of close local fields from \cite{Del,Gan,Kaz}. Let $F'$ be a local field
of positive characteristic and write $G' = \mcG' (F')$. Let $G'_{x',0}$ be the parahoric
subgroup of $G'$ associated to a special vertex $x'$ of the Bruhat--Tits building of $G'$.
For $m \in \ZZ_{> 0}$, let $G'_{x',m}$ be the Moy--Prasad subgroup of level $m$. We will
choose $m$ so large that the representations which we consider have nonzero 
$G'_{x',m}$-fixed vectors. 

Let $\Rep ( \C G', G'_{x',m})$ be the category of $\C G'$-modules which are generated by
their $G'_{x',m}$-fixed vectors. Recall from \cite[Proposition VI.9.4]{Ren} that there
is an equivalence of categories
\begin{equation}\label{eq:4.1}
\begin{array}{ccc}
\Rep (\C G', G'_{x',m}) & \to & \Rep (\mcH_{G',G'_{x',m}}) \\
V & \mapsto & V^{G'_{x',m}}
\end{array}.
\end{equation}
We recall that a non-archimedean local field $F$ is said to be $l$-close to $F'$ if 
there is a ring isomorphism $\mathfrak o_{F'} / \mathfrak p_{F'}^l \cong 
\mathfrak o_F / \mathfrak p_F^l$. Given $F'$ and $l \in \ZZ_{\geq 0}$, there always 
exists an $l$-close $p$-adic field $F$ \cite{Del}.  

\begin{theorem}\label{thm:4.4}
\textup{\cite[Theorem 4.1]{Gan}} \\
Suppose that $F'$ and $F$ are $l$-close, where $l$ is large enough compared to $m$.
Then there exist
\begin{itemize}
\item a semisimple $F$-group $\mcG$,
\item a special vertex $x$ of the Bruhat--Tits building of $G = \mcG (F)$,
\item a bijection $G'_{x',m} \backslash G' / G'_{x',m} \to G_{x,m} \backslash G / G_{x,m}$,
\end{itemize}
which induce an $\C$-algebra isomorphism $\mcH_{G',G'_{x'm}} \cong \mcH_{G,G_{x,m}}$.
\end{theorem}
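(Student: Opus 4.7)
The plan is to construct the isomorphism by the method of close local fields, following Deligne--Kazhdan. I would begin by choosing a smooth affine $\mathfrak o_{F'}$-model $\mathcal P_{x'}$ of $\mcG'$ whose generic fibre is $\mcG'$ and whose $\mathfrak o_{F'}$-points form the parahoric $G'_{x',0}$; concretely, the Bruhat--Tits parahoric group scheme at a special vertex $x'$. Using the hypothesis that $F$ and $F'$ are $l$-close with $l$ large relative to $m$, I would transfer the defining equations of $\mathcal P_{x'}$ modulo $\mathfrak p_{F'}^l$ to $\mathfrak o_F/\mathfrak p_F^l$ via the ring isomorphism $\mathfrak o_{F'}/\mathfrak p_{F'}^l \cong \mathfrak o_F/\mathfrak p_F^l$, and then lift to an $\mathfrak o_F$-model $\mathcal P_x$. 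The generic fibre of $\mathcal P_x$ provides the semisimple $F$-group $\mcG$, and the associated point $x$ in the Bruhat--Tits building of $G$ can be arranged to be special.

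Next I would match the Moy--Prasad quotients. Since $G'_{x',m}$ is the kernel of the reduction $\mathcal P_{x'}(\mathfrak o_{F'}) \to \mathcal P_{x'}(\mathfrak o_{F'}/\mathfrak p_{F'}^m)$, there are canonical identifications
\[
G'_{x',0}/G'_{x',m} \;\cong\; \mathcal P_{x'}(\mathfrak o_{F'}/\mathfrak p_{F'}^m) \;\cong\; \mathcal P_x(\mathfrak o_F/\mathfrak p_F^m) \;\cong\; G_{x,0}/G_{x,m},
\]
the middle one coming from the matching of models modulo $\mathfrak p^l$ with $l\geq m$. To extend this local matching to a bijection of $G'_{x',m}$-double cosets in the full group $G'$, I would invoke the Iwahori--Weyl/Bruhat--Tits decomposition: double cosets modulo the parahoric are parametrized by the Iwahori--Weyl group $\widetilde W'$, which depends only on the absolute root datum together with the local Galois data governing the parahoric; both inputs are shared between $\mcG$ and $\mcG'$ by construction. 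Combining the Iwahori--Weyl labelling with the matched Moy--Prasad quotients yields the required bijection $G'_{x',m}\bsl G'/G'_{x',m} \to G_{x,m}\bsl G/G_{x,m}$.

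The hardest part will be upgrading this vector-space isomorphism to an isomorphism of convolution algebras. For this I would express the structure constants in the basis of indicator functions of double cosets as volumes of the form $\mu_{\mQ}(KaK \cap bKc^{-1}K)$, where $K$ stands for either $G'_{x',m}$ or $G_{x,m}$. After normalising Haar measures so that $\mu_\mQ(K) = \mu_\mQ(K')$, each such intersection is a union of cosets of $K$ inside a bounded region of the parahoric, so its volume reduces to a count of points in $\mathcal P_x(\mathfrak o_F/\mathfrak p_F^{l})$-type quotients for $l$ determined by $m$ and the affine Weyl group data. Since those counts are the same on the $F$- and $F'$-sides by construction, the structure constants agree term by term. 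The only quantitative point is to bound how large $l$ must be, depending on $m$ and the root datum, to carry out all these computations simultaneously; this estimate is exactly the technical content of \cite[Theorem 4.1]{Gan}.
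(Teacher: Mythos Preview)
The paper does not prove this theorem at all: it is quoted from \cite[Theorem~4.1]{Gan} and used as a black box, so there is no argument in the paper to compare your proposal against. Your sketch is a reasonable outline of the Deligne--Kazhdan strategy that Ganapathy's paper carries out in full generality, and the paper's authors clearly intend the reader to consult \cite{Gan} (and the earlier work \cite{Del,Kaz}) for the details.

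That said, two points in your sketch would need tightening if you actually wanted to reconstruct the proof. First, the identification $G'_{x',0}/G'_{x',m} \cong \mathcal P_{x'}(\mathfrak o_{F'}/\mathfrak p_{F'}^m)$ is not literally correct: the Moy--Prasad filtration does not in general coincide with the na\"ive congruence filtration of the parahoric integral model, and part of the work in \cite{Gan} is to set up the filtration comparison correctly. Second, transferring the defining equations of $\mathcal P_{x'}$ through $\mathfrak o_{F'}/\mathfrak p_{F'}^l \cong \mathfrak o_F/\mathfrak p_F^l$ and lifting does not automatically yield a Bruhat--Tits parahoric scheme for a reductive $F$-group; one must verify that the lifted scheme has the right structure, which is where the bulk of \cite{Gan} lies. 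Your final paragraph correctly identifies that the quantitative bound on $l$ in terms of $m$ is the technical heart of the cited result.
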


Theorem \ref{thm:4.4} also holds for reductive groups, but we do not need that here.
From \eqref{eq:4.1} (for $G'$ and for $G$) and Theorem \ref{thm:4.4}, we obtain an
equivalence of categories
\begin{equation}\label{eq:4.2}
\Rep (\C G', G'_{x',m}) \cong \Rep (\C G, G_{x,m}) .    
\end{equation}

\begin{lemma}\label{lem:4.9}
The equivalence of categories \eqref{eq:4.2}:
\begin{enumerate}
\item commutes with the action of $\Gal (\C / \mQ)$,
\item preserves square-integrability of irreducible representations.
\end{enumerate}
\end{lemma}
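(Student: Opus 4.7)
My plan is to treat the two parts separately, using the explicit description of the equivalence \eqref{eq:4.2}. Let $\Phi : \mcH_{G',G'_{x',m}} \to \mcH_{G,G_{x,m}}$ denote the Hecke algebra isomorphism of Theorem \ref{thm:4.4}. Then \eqref{eq:4.2} is the composite of $V \mapsto V^{G'_{x',m}}$, the transport-of-structure equivalence induced by $\Phi$, and its quasi-inverse $W \mapsto \mcH_G \otimes_{\mcH_{G,G_{x,m}}} W$.

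For (1), the first and third functors are manifestly $\Gal(\C/\mQ)$-equivariant: the fixed-vector subspace is cut out by $\mQ$-linear conditions on a $\mQ$-rational basis, and $\mcH_G$ is itself defined over $\mQ$. The content of (1) is therefore that $\Phi$ is defined over $\mQ$. Since $\mu_\mQ$ takes rational values on compact open subgroups, the family $\{\mathbf{1}_{G'_{x',m} g' G'_{x',m}} \mu_\mQ\}$ is a $\mQ$-basis of $\mcH_{G',G'_{x',m}}$ and similarly for $G$. By the construction of Theorem \ref{thm:4.4}, $\Phi$ sends each such basis element on the $G'$-side to the corresponding basis element on the $G$-side under the bijection of double coset spaces. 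Hence $\Phi$ has a $\mQ$-rational matrix in these bases, and therefore commutes with $\Gal(\C/\mQ)$.

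For (2), since $G$ is semisimple the center $Z(G)$ is compact, so by Theorem \ref{thm:Plancherel}.(2) an irreducible $\pi \in \Irr(\C G)$ is square-integrable if and only if $\mu_{Pl}(\{\pi\}) > 0$. My plan is to show that $\Phi$ carries the pushforward of the Plancherel measure on the $G'$-side to that on the $G$-side, along the bijection between representations with nonzero $G'_{x',m}$- or $G_{x,m}$-fixed vectors and irreducible modules over the corresponding Hecke algebras. Restricting the Plancherel formula of Theorem \ref{thm:Plancherel}.(1) to $f\mu_\mQ \in \mcH_{G,G_{x,m}}$ gives
\[
f(1) \;=\; \int_{\{\pi :\, V^{G_{x,m}} \neq 0\}} \mathrm{tr}\,\pi(f\mu_\mQ) \; d\mu_{Pl}(\pi),
\]
where the integrand depends only on $V^{G_{x,m}}$ as an $\mcH_{G,G_{x,m}}$-module. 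The functional $\tau : f\mu_\mQ \mapsto f(1)$ is preserved by $\Phi$: in the double-coset basis it extracts the coefficient of $\mathbf{1}_{G_{x,m}} \mu_\mQ$, which corresponds to $\mathbf{1}_{G'_{x',m}} \mu_\mQ$ under $\Phi$. The character functionals $\mathrm{tr}\,\pi(\cdot)$ also transfer correspondingly via the module correspondence. By uniqueness of the Plancherel measure determined by $\tau$ on the Hecke algebra, $\Phi$ identifies the two pushforward Plancherel measures, and in particular matches their atomic parts. Thus $\mu_{Pl}(\{\pi'\}) > 0$ iff $\mu_{Pl}(\{\pi\}) > 0$, and square-integrability is preserved.

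The main obstacle will be the uniqueness claim for the Plancherel measure on the Hecke algebra side. To handle it I would equip $\mcH_{G,G_{x,m}}$ with its natural $\ast$-algebra structure defined by $f^\ast(g) = \overline{f(g^{-1})}$ and observe that $\Phi$ preserves this involution, since the double-coset bijection in Theorem \ref{thm:4.4} commutes with group inversion. The standard Hilbert-algebra Plancherel theorem applied to $(\mcH_{G,G_{x,m}}, \tau)$ with this $\ast$-structure determines $\mu_{Pl}$ intrinsically from $\tau$, giving the required identification.
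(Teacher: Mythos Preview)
Your proposal is correct and follows essentially the same approach as the paper. For (1) you spell out exactly what the paper asserts in one line, and for (2) both you and the paper equip the Hecke algebras with the Hilbert-algebra structure $(f^*,\tau)$, observe that $\Phi$ preserves it, and invoke the Hilbert-algebra Plancherel theorem together with Theorem~\ref{thm:Plancherel}.(2); the only cosmetic difference is that the paper cites \cite[Theorem 3.5]{BHK} to link the Hecke-algebra Plancherel measure to the group Plancherel measure (up to a positive scalar), whereas you obtain this link by restricting the group Plancherel formula to $\mcH_{G,G_{x,m}}$.
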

\begin{proof}
(1) This holds because the functors in \eqref{eq:4.1} and the algebra isomorphism in
Theorem \ref{thm:4.4} commute with the action of $\Gal (\C / \mQ)$.\\
(2) For general linear groups over division algebras, this is 
\cite[Th\'eor\`eme 2.17.b]{Bad}. That proof can be adapted to the setting of \cite{Gan},
but we prefer a different argument. We regard $\mcH_{G',G'_{x',m}}$ as the algebra of
compactly supported functions on $G'_{x',m} \backslash G' / G'_{x',m}$, where
$G'_{x',m}$ has volume 1. We make it into a unital Hilbert 
algebra with *-operation $f^* (g) = \overline{f(g^{-1})}$ and trace $f \mapsto f(1)$. 
Then it has a $C^*$-completion and a Plancherel measure \cite[\S 3.2]{BHK}. Now
Theorem \ref{thm:4.4} provides an isomorphism of Hilbert algebras 
$\mcH_{G',G'_{x'm}} \cong \mcH_{G,G_{x,m}}$, so it preserves the Plancherel measures.
Furthermore, by \cite[Theorem 3.5]{BHK} the functor \eqref{eq:4.1} preserves 
Plancherel measures, up to positive real factor that depends on the normalization
of the Haar measure on $G'$. The same holds with $G$ instead of $G'$. It follows that
the equivalence of categories \eqref{eq:4.2} preserves Plancherel measures, up to
a positive real factor. That and Theorem \ref{thm:Plancherel} entail that \eqref{eq:4.2}
preserves square-integrability of irreducible representations.
\end{proof}

We are ready to finish the proof of Theorem \ref{Cl}, first for semisimple groups
and then in general.\\

\emph{Proof of Theorem \ref{Cl}}\\
Let $G'$ be as in Theorem \ref{thm:4.4} and let $\pi' \in \Irr (\C G')$ be 
square-integrable. We choose $m \in \ZZ_{>0}$ such that $\pi'$ has nonzero 
$G'_{x',m}$-fixed vectors. Let $G$ be the semisimple group from Theorem 
\ref{thm:4.4}, over a $p$-adic field $F$, and
let $\pi \in \Irr (\C G)$ be the image of $\pi'$ under \eqref{eq:4.2}. By Lemma
\ref{lem:4.9}.(2), $\pi$ is square-integrable. For $\gamma \in \Gal (\C / \mQ)$,
Lemma \ref{lem:4.9}.(1) says that the image of $\pi'^{\gamma}$ under \eqref{eq:4.2}
is $\pi^\gamma$. By Proposition \ref{prop:Clozel}, $\pi^\gamma$ is square-integrable.
Now Lemma \ref{lem:4.9}.(2) says that $\pi'^{\gamma}$ is square-integrable.

This and Proposition \ref{prop:Clozel} establish Theorem \ref{Cl} for all 
semisimple groups over non-archimedean local fields.

Consider any reductive group $G$ over a non-archimedean local field, and an essentially
square-integrable $\pi \in \Irr (\C G)$. By Lemma \ref{lem:4.1} the restriction of $\pi$ 
to the derived group $G_\der$ is a direct sum of irreducible representations $\pi_1$, 
each of which is discrete series. By Theorem \ref{Cl} for semisimple groups, 
$\pi_1^\gamma$ is square-integrable. Thus $\pi^\gamma$ is
an irreducible $G$-representation whose restriction to $G_\der$ is a direct sum of
square-integrable representations $\pi_1^\gamma$.  By Lemma \ref{lem:4.1},
$\pi^\gamma$ is essentially square-integrable. $\quad \Box$

\section{Elliptic representations}\label{sec:7}

The goal of this section is to generalize the results from Section \ref{sec:5}
to 
elliptic representations. Before we come to their definition, 
we recall the notion of $R$-groups from \cite{Ar} that will be essential in this section.

\subsection{Intertwining operators and $\mu$-functions}\ 
\label{sec:6}

First we recall the definition of Harish-Chandra's intertwining operators. Consider two parabolic
subgroups $P = L U_P$ and $P' = L U_{P'}$ with a common Levi factor $L$. Let 
$(\sigma,V_\sigma)$ be an irreducible  $\C L$-representation. All the representations $I_P^G (\sigma \otimes \chi)$
with $\chi \in X_\nr (L)$ can be realized on the same vector space, namely
$\mathrm{ind}_{P \cap K_0}^{K_0} V_\sigma$ for a good maximal compact subgroup $K_0$ of $G$. This
makes it possible to speak of objects on $I_P^G (\sigma \otimes \chi)$ that vary regularly or
rationally as functions of $\chi \in X_\nr (L)$. Consider the intertwining operators
\begin{equation}\label{eq:6.1}
\begin{array}{llll}
J_{P'|P}(\sigma \otimes \chi) : & I_P^G (\sigma \otimes \chi) & \to & I_{P'}^G (\sigma \otimes \chi) \\
 & f & \mapsto & [g \mapsto \int_{U_P \cap U_{P'} \setminus U_{P'}} f(ug) \, \textup{d} u ]
\end{array}.
\end{equation}
This is well-defined as a family of $G$-homomorphisms depending
rationally on $\chi \in X_\nr (L)$ \cite[Th\'eor\`eme IV.1.1]{Wal}. There is an
alternative construction of \eqref{eq:6.1}, in \cite[proof of Th\'eor\`eme IV.1.1]{Wal}.

Assume now that $L \subset G$ is a maximal proper Levi subgroup, and  
let $\bar P = L U_{\bar P}$ be the parabolic subgroup opposite to $P = L U_P$. 
We consider the composition 
\begin{equation}\label{eq:6.9}
J_{P | \bar P}(\sigma \otimes \chi) J_{\bar P | P}(\sigma \otimes \chi) :
I_P^G (\sigma \otimes \chi) \to I_P^G (\sigma \otimes \chi) \qquad \chi \in X_\nr (L) . 
\end{equation}
This depends rationally on $\chi$, and for generic $\chi$ the representation 
$I_P^G (\sigma \otimes \chi)$ is irreducible \cite[Th\'eor\`eme 3.2]{Sau}. Therefore
\eqref{eq:6.9} is a scalar operator \cite[\S IV.3]{Wal}, say 
\begin{equation}\label{eq:6.36}
j_{G,L}(\sigma \otimes \chi) \mathrm{id} \quad \text{with} \quad 
j_{G,L} : X_\nr (L) \sigma \to \C \cup \{\infty\} .
\end{equation}
Let now $P = L U_P$ be an arbitrary parabolic subgroup of $G$. Let $\Phi (G,Z(L))$ 
be the set of reduced roots of $(G,Z(L))$ and let $\Phi (G,Z(L))^+$ be the subset of 
roots appearing in the Lie algebra of $P$. 
For $\alpha \in \Phi (G,Z(L))^+$, let 
$U_\alpha$ (resp. $U_{-\alpha}$) be the root subgroup of $G$ for all positive
(resp. negative) multiples of $\alpha$. Let $L_\alpha$ be the Levi subgroup of
$G$ generated by $L \cup U_\alpha \cup U_{-\alpha}$. Then $L$ is a maximal proper
Levi subgroup of $L_\alpha$, so the earlier results from this section apply to
$L \subset L_\alpha$. By definition \cite[\S V.2]{Wal} Harish-Chandra's $\mu$-function
in this corank one setting is
\begin{equation}\label{eq:6.15}
\mu_{L_\alpha,L}(\sigma \otimes \chi) = \gamma ( L_\alpha | L)^2 
j_{L_\alpha,L}(\sigma \otimes \chi)^{-1},
\end{equation}
where the constant $\gamma (L_\alpha | L) \in \mQ_{>0}$ is defined in \cite[p. 241]{Wal}.



\subsection{R-groups and elliptic representations} \
\label{par:7.2}

This paragraph is based on constructions and results of J. Arthur presented in \cite{Ar}.
We note that Arthur considered only reductive groups defined over local fields of
characteristic zero. But all the harmonic analysis which is necessary for these results
has also been developed for reductive groups over local fields of positive characteristic,
so we may include those as well.

As before, let $L\subset G$ be a Levi subgroup, and let the group $N_G (L)$ acts on 
$\Irr (\C L)$, by $(n \cdot \sigma)(l) = \sigma (n^{-1} l n)$. This action descends to an action of 
\[
W_L := N_G (L) / L
\] 
on $\Irr (\C L)$,  
and we denote by $W_{L,\delta}\subset W_L$ the stabilizer of $\delta \in \Irr (\C L)$. 

Let $X_*(Z(L))$ and $X_*(Z(G))$ be the groups of cocharacters (over $F$). 
We say that 
\begin{equation}\label{eq:5.4}
w \in W_L \quad \text{is elliptic if} \quad
X_* (Z(L))^w = X_* (Z(G)).
\end{equation}
We let $W_{L,ell}$ be the set of elliptic elements of $W_L$ and for every $\delta\in \Irr(\C L)$ we write
\[
W_{L,\delta,ell}:= W_{L,\delta} \cap W_{L,ell}.
\]

Let $\delta \in \Irr (\C L)$ be a square-integrable representation (modulo centre). 
Consider the set of reduced roots $\alpha$ of $(G,Z(L))$ such that Harish-Chandra's function
$\mu_{L_\alpha,L}$ (see \cite[\S V.2]{Wal} or the previous paragraph) has a zero at $\delta$. These roots form a finite integral root system, say $\Phi_\delta$. The group $W_{L, \delta}$ acts on $\Phi_\delta$ and contains the Weyl group
$W(\Phi_\delta )$ as a normal subgroup. Let $\Phi_\delta^+$ be the
positive system of roots appearing in the Lie algebra of $P$. The R-group
$R_\delta$ is defined as the stabilizer of $\Phi_\delta^+$ in
$W_{L,\delta}$. Since $W(\Phi_\delta )$ acts simply transitively
on the collection of positive systems in $\Phi_\delta$, we have a decomposition
\begin{equation}\label{eq:5.5}
W_{L,\delta} = W(\Phi_\delta ) \rtimes R_\delta .
\end{equation}

Let $P=LU_P$ be a parabolic subgroup of $G$ and consider the $G$-represen\-ta\-tion
$I_P^G (\delta)$ obtained by normalized parabolic induction. As in \cite{Ar}, there 
exists a canonical projective representation 
$J_\delta  : R_\delta \to \text{Aut}_{G} ( I_P^G (\delta))/\C^{\times}$. 

By the theory of projective representations, $J_{\delta}$ gives rise to an element in the cohomology group  
$\eta_{\delta}\in H^2(R_{\delta},\C^{\times})$, and there exists a central extension 
\begin{equation} \label{extension}
1\to Z_\delta \to \ti R_\delta \overset{\tau}{\to} R_\delta \to 1
\end{equation}
of finite groups such that the image $\ti\eta_{\delta}\in H^2(\ti R_{\delta},\C^{\times})$ of $\eta_{\delta}$ vanishes. 

Hence, 
the projective representation $J_\delta\circ \tau: \ti R_\delta \to Aut_{G} ( I_P^G (\delta))/\C^{\times}$ 
has a lift to an (actual) representation  $\tilde J_\delta  : \ti R_\delta \to Aut_{G} ( I_P^G (\delta))$, 
unique up to a character $\ti R_\delta\to \C^{\times}$. In particular, we have $\tilde J_\delta |_{Z_\delta} = \zeta \,$Id 
for some character $\zeta : Z_\delta \to \mC ^\times$.

Let $\Pi (\delta, \zeta)$ be the set of irreducible representations of the group $\ti R_\delta$ whose restriction on $Z_\delta$ is equal to $\zeta \,$Id. Then there is an isomorphism of $\ti R_\delta \times G$-representations
\begin{equation}\label{eq:5.6}
I_P^G (\delta) \cong \bigoplus\nolimits_{\kappa \in \Pi (\delta,\zeta)} \kappa \otimes I_P^G (\delta)_\kappa ,
\end{equation}
where each $I_P^G (\delta)_\kappa$ is irreducible. 


For $r\in R_{\delta}$ we choose a preimage $\ti r\in \tau^{-1}(r)\subset\ti R_{\delta}$, and consider the virtual $G$-representation
\begin{equation}\label{eq:5.7}
I_P^G (\delta)_{r} = I_P^G (\delta)_{(\tau,\ti J_{\delta},\ti r)}:=\sum\nolimits_{ \kk \in \Pi (\delta, \zeta) } \text{tr}
(\kk (\ti r)) I_P^G (\delta)_\kk \; \in\mcK_G. 
\end{equation}

Notice that the group  $\ti R_\delta$ is finite. Therefore for every $\tilde r\in \ti R_\delta$ and  $\kk \in \Pi (\delta, \zeta)$ we have $\text{tr} (\kk (\ti r)) \in \OQ$, thus $I_P^G (\delta)_{r}$ indeed belongs to $\mcK_G$. Notice that a priori $I_P^G (\delta)_{r}$ depends on a triple 
$(\tau,\ti J_{\delta},\ti r)$. 

\begin{lemma} \label{lem:indep}
The virtual representation $I_P^G (\delta)_{r}$ is defined uniquely up to a scalar in $\OQ^{\times}$ (a root of unity). 
\end{lemma}

\begin{proof}
To show the independence of $\ti r$, we fix a pair $(\tau, \ti J_{\delta})$. Then for every $\ti r',\ti r\in \tau^{-1}(r)$ 
we have $\ti r'=z\ti r$ for some $z\in Z_{\delta}$. Then 
$\ti J_{\delta}(z)=\zeta(z)$Id, so the independence of $\ti r$ follows 
from the equality 
$I_P^G (\delta)_{(\tau,\ti J_{\delta},\ti r')}=\zeta(z)\cdot I_P^G (\delta)_{(\tau,\ti J_{\delta},\ti r)}$. 

To show the independence of $\ti J_{\delta}$, we fix $\tau$. Then for every two lifts\\ $\ti J'_{\delta},\ti
J_{\delta}:\ti R_{\delta}\to \Aut_G( I_P^G (\delta))$ of $\tau\circ J_{\delta}$
there exists a character $\chi:\ti J_{\delta}\to \C^{\times}$ such that 
$\ti J'_{\delta}(\ti r)=\ti J_{\delta}(\ti r)\cdot \chi(\ti r)$ for every 
$\ti r\in \ti R_{\delta}$. Then the independence of $\ti J_{\delta}$ 
follows from the equality $I_P^G (\delta)_{(\tau,\ti J'_{\delta},\ti r)}=\chi(\ti r)
\cdot I_P^G (\delta)_{(\tau,\ti J_{\delta},\ti r)}$. 

Next, assume that $\tau:\ti R_{\delta}\to R_{\delta}$ and $\tau':\ti R'_{\delta}\to R_{\delta}$ 
are two central extensions such that $\tau'$ dominates $\tau$, that is, there exists homomorphism 
$\nu:\ti R'_{\delta}\to \ti R_{\delta}$ such that $\tau'=\tau\circ\nu$. Then for every 
lift $\ti J_{\delta}$ of $\tau\circ J_{\delta}$, the composition $\ti J'_{\delta}:=\ti J_{\delta}\circ \nu$ is a lift of $\tau'\circ J_{\delta}$. Then for every $\ti r'\in \ti R'_{\delta}$ we have an equality $I_P^G (\delta)_{(\tau',\ti J'_{\delta},\ti r')}= 
I_P^G (\delta)_{(\tau,\ti J_{\delta},\nu(\ti r'))}$, and the independence of the central extension  follows in this case. 

Finally, to show the independence of the central extension in general, 
we notice that for any two central extensions
 $\tau:\ti R_{\delta}\to  R_{\delta}$ and $\tau':\ti R'_{\delta}\to
R_{\delta}$, there is a third one $\tau'':\ti R_{\delta}\times_{R_{\delta}}\ti R'_{\delta}\to  R_{\delta}$, which dominates the first two. 
\end{proof}





\begin{definition}
We set $R_{\delta,ell}:= R_\delta \cap W_{L,ell}$. Virtual representations  $I_P^G (\delta)_r$ with $r \in R_{\delta ,ell}$ are called elliptic tempered $G$-representations. More generally, virtual representations of the form  $I_P^G (\delta)_r\otimes\chi$ with  $r \in R_{\delta ,ell}$ and $\chi\in X_{\nr}(G)$ are called elliptic $G$-representations.
\end{definition}

By definition, every square-integrable (modulo centre) $G$-representation is elliptic
tempered. One can characterize characters of elliptic tempered $G$-representations also as characters of $G$ 
that decrease rapidly (in a precise sense) when $g \in G$ goes to infinity \cite{Herb}.


\begin{definition} 
\begin{enumerate}
\item For every irreducible
square-integrable (modulo centre) representation $\delta$ of a Levi subgroup of $G$ we denote by 
$\mcK_{G,\delta} (ar) \subset \mcK_{G,\text{temp}} $ the $\OQ$-span of the
virtual representations $I_P^G (\delta )_r$ with $r \in R_{\delta,ell}$. 

\item We denote by $\mcK_{G,\text{temp}} (ar) \subset \mcK_{G,\text{temp}}$ the span of all 
the the subspaces $\mcK_{G,\delta}$, and denote by $\mcK_{G}(ar) \subset \mcK_{G}$ the $\OQ$-span of the virtual representations $\pi\otimes\chi$ with $\pi\in\mcK_{G,\text{temp}} (ar)$ and $\chi\in X_{nr}(G)$. 

\item  We set $\mcK'_{G}(ar):=\mcK_{G}(ar)\cap \mcK'_{G}$, $\mcK'_{G,\text{temp}}(ar):=\mcK_{G,\text{temp}}(ar)\cap \mcK'_{G}$, $\mcK_{\OQ G}(ar):=\mcK_G (ar)\cap \mcK_{\OQ G}$, $\mcK'_{G,\text{temp}}(ar):=\mcK_{G,\text{temp}}(ar)\cap \mcK'_{G}$, and $\mcK_{\OQ G,\text{temp}}(ar):=\mcK_{G,\text{temp}} (ar)\cap \mcK_{\OQ G}$.
\end{enumerate}
\end{definition}

The following two results are not needed in this paper, but they highlight the
importance of the space of elliptic (tempered) $G$-representations.
For a Levi subgroup $L$ of $G$ we denote by $\mcK_G^L (ar) \subset \mcK_G$
the image of the space $\mcK_L (ar)$ under the map $I_P^G : \mcK_L \to \mcK_G $. 
This subspace depends only on the conjugacy class of $L$. 

\begin{theorem}\label{thm:5.6} \textup{\cite{Ar}} \\
$\mcK_{G,\text{temp}} = \bigoplus_L \mcK_{G,\text{temp}}^L (ar)$ where $L$ runs through 
the set of conjugacy classes of Levi subgroups of $G$ (including $G$ itself).
\end{theorem}

\begin{corollary} 
$\mcK_G$ equals $\bigoplus\nolimits_L \mcK_G^L (ar)$.
\end{corollary}
\begin{proof}
This follows from Theorem \ref{thm:5.6} and the comparison of the geometric structures
of $\Irr (\C G)$ and its subspace of tempered representations, as in 
\cite[Proposition 2.1]{ABPS}.
\end{proof}

\subsection{Realization of elliptic representations over $\OQ$} \label{par:7.3} \

As before, let $L\subset G$ be a Levi subgroup. The following simple observation will be central for what follows.  

\begin{lemma}\label{lem:5.3}
Let $\delta \in \Irr (\C L)$.  
\begin{enumerate}
\item Assume that $W_{L,\delta,ell}$ is nonempty. There exists 
$\chi_G \in X_\nr (G)$ such that the image of the central character of $\delta \otimes (\chi_G|_L)$ is finite.

\item The set $\{ \chi \in X_\nr (L) : W_{L,\delta \otimes \chi,ell} \text{ is nonempty} \}$
is a finite union of $X_\nr (G)$-orbits. 
\end{enumerate}
\end{lemma}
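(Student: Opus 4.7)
My plan is to reduce both parts to a single structural identification: for any elliptic $w \in W_L$, the identity component of the $w$-fixed subgroup of the complex torus $T := X_\nr(Z(L)) = \Hom(Z(L)/Z(L)^1, \C^\times)$ coincides with the image $S \subseteq T$ of the restriction map $X_\nr(G) \to T$ (where $W_L$ acts on $T$ via conjugation on $Z(L)$). I will establish this in three steps: (i) the composition $X_\nr(G) \to X_\nr(L) \to T$ has finite kernel, since $G/LG^1$ is finite and $Z(L)L^1/L^1$ has finite index in $L/L^1$, so $S$ is a subtorus of dimension $d_G$; (ii) any $\chi_G \in X_\nr(G)$ is trivial on $[G,G] \subseteq G^1$, giving $\chi_G(n^{-1}zn) = \chi_G(z)$ for $n \in N_G(L), z \in Z(L)$, and hence $S \subseteq T^w$ for every $w \in W_L$; (iii) via the identification $X^*(T) = Z(L)/Z(L)^1$, which is rationally the same $W_L$-module as $X_*(Z(L))$, the ellipticity condition $X_*(Z(L))^w = X_*(Z(G))$ yields $\dim (T^w)^0 = d_G$, forcing $S = (T^w)^0$.

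For part (1), pick a representative $n \in N_G(L)$ of $w \in W_{L,\delta,\el}$. The isomorphism $n \cdot \delta \cong \delta$ and Schur's lemma give $cc(\delta)(n^{-1}zn) = cc(\delta)(z)$, so $cc(\delta)$ is $w$-invariant as a character of $Z(L)$. Decompose $cc(\delta) = \eta \cdot \tau$ with $\eta$ finite-order (absorbing the restriction to the compact subgroup $Z(L)^1$) and $\tau \in T$. The $w$-invariance of $cc(\delta)$ forces $w\tau/\tau$ to be a finite-order element of $T$, so some power $\tau^N \in T^w$, and after further enlarging $N$, $\tau^N \in (T^w)^0 = S$. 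Using divisibility of the torus $S$, pick $s \in S$ with $s^N = \tau^N$; then $\tau s^{-1}$ has finite image. Lifting $s^{-1}$ to $\chi_G \in X_\nr(G)$ via the surjection $X_\nr(G) \twoheadrightarrow S$ makes $cc(\delta \otimes \chi_G|_L) = \eta \cdot (\tau s^{-1})$ finite-order.

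For part (2), finiteness of $W_{L,\el}$ lets me handle each $w \in W_{L,\el}$ separately. The condition $w \cdot (\delta \otimes \chi) \cong \delta \otimes \chi$, via $w\cdot(\delta\otimes\chi) = (w\delta)\otimes(w\chi)$, rewrites as $w\delta \cong \delta \otimes \chi(w\chi)^{-1}$. If $w\delta$ is not in the $X_\nr(L)$-orbit of $\delta$ there are no solutions; otherwise, fixing $\psi_0$ with $w\delta \cong \delta \otimes \psi_0$, the condition becomes $\phi_w(\chi) \in \psi_0 \cdot X_\nr(L)_\delta$ for the homomorphism $\phi_w(\chi) := \chi(w\chi)^{-1}$ and $X_\nr(L)_\delta := \{\psi : \delta \otimes \psi \cong \delta\}$. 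The stabiliser $X_\nr(L)_\delta$ is finite, since every element must be trivial on $Z(L)$ and hence lies in the finite kernel of $X_\nr(L) \twoheadrightarrow T$. Therefore the solution set is $\phi_w^{-1}$ of a finite set, a finite union of cosets of $\ker\phi_w = X_\nr(L)^w$. Applying steps (i)--(iii) with $X_\nr(L)$ in place of $T$ (the rational equality $L/L^1 \otimes \QQ = Z(L)/Z(L)^1 \otimes \QQ$ transports the ellipticity computation) identifies $(X_\nr(L)^w)^0$ with the image of $X_\nr(G) \to X_\nr(L)$, so cosets of $X_\nr(L)^w$ are finite unions of $X_\nr(G)$-orbits.

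The main obstacle is step (iii): carefully translating the lattice-theoretic ellipticity $X_*(Z(L))^w = X_*(Z(G))$ into the torus-theoretic dimension statement $\dim(T^w)^0 = d_G$, and simultaneously verifying that the chain $X_\nr(G) \to X_\nr(L) \to X_\nr(Z(L))$ preserves the dimension $d_G$ with only finite discrepancies at each step. Once this piece of linear algebra is set up, the rest reduces to elementary manipulations with complex tori under finite group actions and divisibility of $\C^\times$.
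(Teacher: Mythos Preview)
Your proposal is correct and rests on the same linear-algebraic core as the paper's proof: the ellipticity hypothesis $X_*(Z(L))^w = X_*(Z(G))$ forces the $w$-invariant part of the relevant lattice (or dually, torus) to be no larger than what comes from $G$. The packaging, however, differs. For part~(1), the paper first picks $\chi_G$ using only the restriction $cc(\delta)|_{Z(G)}$ (as in Lemma~\ref{lem:finim}), and then shows \emph{a posteriori} that this choice already works on all of $Z(L)$, by the additive observation that $X_*(Z(G)) + \mathrm{Im}(w-1)$ has finite index in $X_*(Z(L))$. You instead set up the multiplicative/torus-theoretic identification $(T^w)^0 = S$ in advance and then extract $\chi_G$ by divisibility; this is the dual formulation of the same fact. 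For part~(2), the paper invokes directly the alternative characterization of ellipticity (finitely many $w$-fixed points in $X_\nr(L)/X_\nr(G)$) to conclude that $(X_\nr(L)/X_\nr(L,\delta))^w$ is a finite union of $X_\nr(G)$-orbits, whereas you unwind this via $\ker\phi_w = X_\nr(L)^w$ and the identification of its identity component with the image of $X_\nr(G)$. Your route is a bit longer but makes the torus-theoretic mechanism more explicit; the paper's is terser but leans on the equivalent formulation of ellipticity stated just after \eqref{eq:5.4}.
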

\begin{proof}
(1) Arguing as in Lemma~\ref{lem:finim}, we conclude that there exists 
$\chi_G \in X_\nr (G)$ such that the image of the tensor product cc$(\delta)|_{Z(G)}\otimes \chi_G|_{Z(G)}$ is finite. We claim that this $\chi_G$ satisfies the required property. Indeed, replacing $\delta$ 
by $\delta \otimes (\chi_G|_L)$ we can assume that $w(\delta)\cong \delta$ for some $w\in W_{L,ell}$ and the image cc$(\delta)(Z(G))$ is finite. We want to show that the image of $\mu:=$cc$(\delta):Z(L)\to\C^{\times}$ is finite.

Choose an uniformizer $\varpi\in F$. Then the map $\nu\mapsto\nu(\varpi)$ gives an injective map 
$X_*(Z(L))\hookrightarrow Z(L)$ such that $X_*(Z(L))Z(L)^1\subset Z(L)$ is a subgroup of finite index. Since $Z(L)^1$ is compact and $\mu$ is smooth, the image $\mu(Z(L)^1)$ is finite. 
Therefore it suffices to show that the image  $\mu(X_*(Z(L)))$ is finite. 

Since $w(\delta)\cong\delta$, we conclude that $w(\mu)=\mu$, thus $\mu$ is tvivial on the image $\text{Im}(w-1)\subset X_*(Z(L))$. On the other hand, the assumption that $w$ is elliptic implies that $X_*(Z(G))+\text{Im}(w-1)\subset X_*(Z(L))$ is a subgroup of finite index. 
Therefore finiteness of $\mu(X_*(Z(L)))$ follows from the finiteness of $\mu(Z(G))$.  

(2) Since $W_L$ is finite, it suffices to show that for every $w \in W_{L,ell}$ the set 
$\{ \chi \in X_\nr (L) : w \in W_{L,\delta \otimes \chi} \}$ is a finite union of $X_\nr (G)$-orbits. Replacing $\delta$ by $\delta\otimes\chi$, if necessary,
we may assume that $w(\delta)=\delta$. Let $X_\nr (L,\delta)$ be the collection of all  $\chi\in X_\nr (L)$
such that $\chi\otimes\delta\simeq\delta$, so that there is a bijection
\[
X_\nr (L) / X_\nr (L,\delta) \to X_\nr (L) \delta : \chi \mapsto \delta \otimes \chi .
\]
This map is $w$-equivariant, so it induces a bijection between the $w$-fixed points on
both sides. Since $w$ is assumed to be elliptic, $(X_\nr (L)/X_\nr (L,\delta))^w$
is a finite union of $X_\nr (G)$-orbits. Hence $(X_\nr (L) \delta )^w$ is a finite union of 
$X_\nr (G)$-orbits as well. 
\end{proof}

Consider a square-integrable (modulo centre) $\delta \in \Irr (\C L)$.
By definition, the space of elliptic representations $\mcK_{G,\delta}$ is nonzero
if and only if $R_{\delta,ell}$ is nonempty, and this condition implies that 
$W_{L,\delta,ell}$ is nonempty. 

\begin{theorem}\label{thm:5.4}
Let $\delta \in \Irr (\C L)$ be a square-integrable representation (modulo centre) such that 
$R_{\delta,ell}$ is nonempty. The $G$-representations $I_P^G (\delta)_\kappa$ with 
$\kappa \in \Pi (\delta, \zeta)$ can be realized over $\OQ$ if and only if 
cc$(I_P^G (\delta)_\kappa) = \text{cc}(\delta) |_{Z(G)}$ takes values in $\OQ^\times$. 
In that case, $\delta$ can be realized over $\OQ$, all elliptic representations  
$I_P^G (\delta )_r$ with $r \in R_{\delta,ell}$ are defined over $\OQ$,  
and $\mcK_{G,\delta}(ar) = \mcK_{\OQ G,\delta}(ar)$.
\end{theorem}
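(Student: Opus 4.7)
The plan is to prove (1) by first showing that $\mathrm{cc}(\delta)$ itself---not just its restriction to $Z(G)$---takes values in $\OQ^\times$, which via Theorem~\ref{thm:4.7}.(1) realizes $\delta$ over $\OQ$, and then to descend the decomposition \eqref{eq:7.9} to $\OQ$. The ``only if'' direction is immediate: any $\OQ G$-constituent of $I_P^G(\delta)_\kappa$ has central character $\mathrm{cc}(\delta)|_{Z(G)}$, which must lie in $\OQ^\times$.

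For the converse I would exploit the hypothesis $R_{\delta,\el}\neq\emptyset$. Pick $w \in R_{\delta,\el}\subset W_{L,\delta}$. Then $w\cdot\delta\cong\delta$ forces $\mu := \mathrm{cc}(\delta):Z(L)\to\C^\times$ to be $w$-invariant, hence trivial on $(w-1)Z(L)$. Using the embedding $X_*(Z(L))\hookrightarrow Z(L)$ via a uniformizer (as in the proof of Lemma~\ref{lem:5.3}.(1)) together with the ellipticity statement \eqref{eq:5.4} in its equivalent form that $X_*(Z(G))+(w-1)X_*(Z(L))$ has some finite index $n$ in $X_*(Z(L))$, one writes $n\nu=\nu_G+(w-1)\nu'$ for any $\nu\in X_*(Z(L))$. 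This gives $\mu(\nu(\varpi))^n=\mu(\nu_G(\varpi))\in\OQ^\times$, so $\mu(\nu(\varpi))\in\OQ^\times$ since $\OQ$ is algebraically closed. Together with $\mu|_{Z(L)^1}$ being valued in roots of unity (by smoothness and compactness), this yields $\mu(Z(L))\subset\OQ^\times$.

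Having realized $\delta$ as $\delta_\OQ$, I would apply Lemma~\ref{lem:indep}.(3) to choose $J_\delta(r)\in\End_{\OQ G}(I_P^G(\delta_\OQ))$ for all $r\in R_\delta$; by Lemma~\ref{lem:indep}.(1) the cocycle $\natural_\delta$ is valued in roots of unity, so $\OQ[R_\delta,\natural_\delta]$ is an $\OQ$-form of $\C[R_\delta,\natural_\delta]$. Since $R_\delta$ is finite and $\OQ$ is algebraically closed and contains all roots of unity, every $\kappa$ descends to $\OQ$, and its isotypic summand in \eqref{eq:7.9} provides the required $\OQ$-form of $I_P^G(\delta)_\kappa$. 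Part (2) is then a formal consequence of \eqref{eq:5.7}: the spanning set $\{I_P^G(\delta)_r : r\in R_{\delta,\el}\}$ of $\mcK_{G,\delta}$ consists of $\OQ$-linear combinations of the $I_P^G(\delta)_\kappa$, with coefficients $\mathrm{tr}\,\kappa(r)$ that are sums of roots of unity and thus lie in $\OQ$. The main obstacle is the arithmetic step in the second paragraph: one must correctly translate the abstract ellipticity of $w$ into the concrete finite-index statement used to propagate $\OQ$-rationality from $Z(G)$ across all of $Z(L)$; the remainder is bookkeeping of $\OQ$-structures on parabolic induction and finite-group representations.
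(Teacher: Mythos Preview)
Your argument is correct, and the final descent of \eqref{eq:7.9} via Lemma~\ref{lem:indep}.(3) and the handling of part~(2) match the paper almost verbatim. The difference lies in how you show that $\delta$ itself is defined over $\OQ$. The paper proceeds indirectly: it first invokes Theorem~\ref{thm:4.7}.(2) to write $\delta = \delta' \otimes \chi$ with $\delta'$ already defined over $\OQ$ and $\chi \in X_\nr(L)$, and then runs a Galois-orbit finiteness argument (via Lemma~\ref{lem:5.3}.(2) and the finite-index inclusion $G^1 Z(G) \subset G$, exactly parallel to the proof of Theorem~\ref{thm:4.7}.(1)) to force $\chi$ to be $\OQ^\times$-valued. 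You instead attack $\mathrm{cc}(\delta)$ directly, replaying the arithmetic of Lemma~\ref{lem:5.3}.(1) with the target property ``$\OQ^\times$-valued'' in place of ``finite image''; once $\mathrm{cc}(\delta)(Z(L)) \subset \OQ^\times$, Theorem~\ref{thm:4.7}.(1) finishes immediately. Your route is shorter and avoids Lemma~\ref{lem:5.3}.(2) altogether; the paper's route has the virtue of making the structural analogy with Theorem~\ref{thm:4.7}.(1) explicit. Both ultimately rest on the same finite-index fact $X_*(Z(G)) + (w-1)X_*(Z(L)) \subset X_*(Z(L))$, which you correctly identified as the crux and which is indeed the content of ellipticity used in Lemma~\ref{lem:5.3}.(1).
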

\begin{proof}
$\Rightarrow$ If each $I_P^G (\delta)_\kappa$ can be realized over $\OQ$, then by Proposition 
\ref{prop:3.14} their cuspidal support Sc$(I_P^G (\delta)_\kappa) = \text{Sc}(\delta)$ 
can be realized over $\OQ$. Another application of Proposition \ref{prop:3.14} shows that 
$\delta$ can be realized over $\OQ$. Hence cc$(\delta)$ takes values in $\OQ^\times$.\\
$\Leftarrow$ By Theorem \ref{thm:4.7}.(2) there exists $\chi\in X_\nr (L)$ such
that $\delta' := \delta\otimes \chi^{-1}$ can be realized over $\OQ$. We claim
that $\chi$ takes values in $\OQ^\times$.

Assuming this claim for the moment, $\delta = \delta'\otimes \chi$ can be realized 
over $\OQ$, so $I_P^G (\delta)$ can be realized 
over $\OQ$ as well. In that case Lemma \ref{lem:3.12} shows that 
all its subrepresentations $I_P^G (\delta)_\kappa$  can be realized over $\OQ$.

The proof of the claim is almost identical to that of Theorem \ref{thm:4.7}.(1). Arguing as 
over there, we deduce from Lemma \ref{lem:5.3}.(2) that the restriction $\chi |_{L\cap G^1}$ 
has values in $\OQ^\times$. Next, since $cc(\delta)|_{Z(G)}$ has values in $\OQ^\times$ and 
$\delta'$ can be realized over $\OQ$, we deduce that $\chi |_{Z(G)}$ takes values in 
$\OQ^\times$. Hence $\chi |_{L\cap G^1Z(G)}$ takes values in $\OQ^\times$, therefore $\chi$ 
also has this property because $[G : G^1 Z(G)] < \infty$.

When the above equivalent conditions are fulfilled, the elliptic representation 
$I_P^G (\delta )_r$ with $r \in R_{\delta,ell}$ are defined over $\OQ$. 
By definition, these functions span over $\OQ$ both $\mcK_{G,\delta}(ar)$ and $\mcK_{\OQ G,\delta}(ar)$.
\end{proof}

We are ready to conclude the proof of Theorem \ref{T2}.

\begin{corollary} \label{cor:5.7}
\begin{enumerate}
\item Every elliptic representation in $\mcK'_G (ar)$ can be realized over $\OQ$.
\item $\mcK'_{G,\text{temp}} (ar)$ equals $\mcK_{\OQ G,\text{temp}}(ar)$ and
$\mcK'_G (ar)$ equals $\mcK_{\OQ G} (ar)$.
\end{enumerate}
\end{corollary}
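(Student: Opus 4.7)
The plan is to reduce Corollary \ref{cor:5.7} to Theorem \ref{thm:5.4}, using the decomposition of $\mcK_G(ar)$ indexed by inertial classes of essentially square-integrable representations of Levi subgroups.

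For part (1), I would take a nonzero elliptic representation $v = I_P^G(\delta)_r$ with $r \in R_{\delta,\el}$ that lies in $K(\mcR'_G) \otimes_\mZ \OQ$. By \eqref{eq:5.7}, its irreducible constituents are the $I_P^G(\delta)_\kappa$ occurring with nonzero coefficient, and each of these has central character $cc(\delta)|_{Z(G)}$. The hypothesis $v \in \mcK'_G$ therefore forces $cc(\delta)|_{Z(G)} \subset \OQ^\times$. Theorem \ref{thm:5.4}.(1) then realizes all the $I_P^G(\delta)_\kappa$ over $\OQ$, compatibly with a normalization of the intertwiners $J_\delta(r)$ inside $\End_{\OQ G}(I_P^G(\delta_\OQ))$ supplied by Lemma \ref{lem:indep}.(3). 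Hence $v$ itself is defined over $\OQ$ as a virtual representation.

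For part (2), the inclusion $\mcK_{\OQ G}(ar) \subseteq \mcK'_G(ar)$ is immediate, since every virtual $\OQ G$-representation has irreducible constituents with central characters in $\OQ^\times$ by Theorem \ref{thm:3.4}. For the reverse inclusion I would decompose an arbitrary $v \in \mcK'_G(ar)$ according to inertial classes, writing $v = \sum_i v_i$ with $v_i \in \mcK_{G,\delta_i}(ar)$ for representatives $(L_i,\delta_i)$ of distinct $G$-conjugacy classes. Since the irreducible constituents of the different $\mcK_{G,\delta_i}(ar)$ lie in disjoint Bernstein components by Theorem \ref{ind}, the condition $v \in \mcK'_G$ forces each $v_i \in \mcK'_G$. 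Part (1) (together with Theorem \ref{thm:5.4}.(2), which yields $\mcK_{G,\delta_i} = \mcK_{\OQ G,\delta_i}$ whenever $cc(\delta_i)|_{Z(G)} \subset \OQ^\times$) then places each $v_i$ in $\mcK_{\OQ G}(ar)$, and summing delivers the claim. The tempered equality $\mcK'_{G,\temp}(ar) = \mcK_{\OQ G,\temp}(ar)$ follows from the same argument after restricting $\delta$ throughout to representations that are square-integrable modulo centre.

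The main obstacle I anticipate is justifying the decomposition step cleanly, namely that $\mcK_G(ar) = \bigoplus_{(L,\delta)} \mcK_{G,\delta}(ar)$ as an internal direct sum and that intersection with $\mcK'_G$ respects this decomposition. This will rest on the disjointness of Bernstein components from Theorem \ref{ind}, combined with the linear independence provided by Lemma \ref{lem:5.1}.(2) for the basis of elliptic representations within each $\mcK_{G,\delta}(ar)$; the remaining work is then essentially bookkeeping.
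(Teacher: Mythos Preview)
Your argument for part (1) is correct and matches the paper's: the central character of every constituent $I_P^G(\delta)_\kappa$ is $cc(\delta)|_{Z(G)}$, and once this lies in $\OQ^\times$ Theorem \ref{thm:5.4} applies.

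For part (2), your overall strategy is right, but the justification you give for the key step has a gap. You claim that for distinct $G$-conjugacy classes of pairs $(L_i,\delta_i)$ the irreducible constituents of the various $\mcK_{G,\delta_i}(ar)$ lie in disjoint Bernstein components, citing Theorem \ref{ind}. This is false: Bernstein components are indexed by inertial classes of \emph{cuspidal} pairs, not of essentially square-integrable pairs. Two non-conjugate essentially square-integrable $\delta_1,\delta_2$ on the same Levi $L$ can easily have the same cuspidal support (e.g.\ the Steinberg and the trivial representation of $GL_2$), and then $I_P^G(\delta_1)$ and $I_P^G(\delta_2)$ sit in the same Bernstein block of $G$. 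More to the point, a single Bernstein block contains irreducibles with many different central characters, so the Bernstein decomposition does not interact well with the definition of $\mcK'_G$.

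The fix is simpler than what you propose. Each elliptic representation $I_P^G(\delta)_r$ lies entirely in the $Z(G)$-eigenspace $\mcK_G^\theta$ for $\theta = cc(\delta)|_{Z(G)}$, because all of its constituents $I_P^G(\delta)_\kappa$ share that central character. Hence $\mcK_G(ar) = \bigoplus_\theta \mcK_G(ar)^\theta$, where the $\theta$-piece is spanned by elliptic representations with central character $\theta$. Since $\mcK'_G = \bigoplus_{\theta: Z(G)\to\OQ^\times}\mcK_G^\theta$, intersecting gives that $\mcK'_G(ar)$ is spanned by elliptic representations whose central character takes values in $\OQ^\times$; part (1) then places each of these in $\mcK_{\OQ G}(ar)$. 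This is what the paper has in mind when it says part (2) is a ``direct consequence'' of part (1): no finer decomposition by $(L,\delta)$ is needed, only the coarse decomposition by $Z(G)$-character, which is exactly what $\mcR'_G$ is sensitive to.
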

\begin{proof}
(1) By Lemma~\ref{lem:5.3}.(1), the space $\mcK'_G (ar)$ is spanned by virtual
repre\-sen\-ta\-tions $\pi=I_P^G (\delta )_r\otimes\chi$, where $I_P^G (\delta )_r$ 
is a tempered elliptic $G$-repre\-sen\-ta\-tion, corresponding to a square-integrable 
representation $\delta$, whose central character is of finite order, and 
$\chi\in X_{nr}(G)$ is a $\OQ^{\times}$-valued unramified character of $G$.  
Since the virtual representation $I_P^G (\delta')_\kappa$ can be realized over $\OQ$ 
by Theorem~\ref{thm:5.4}, $\pi$ can be realized over $\OQ$ as well. 



(2) This is direct consequence of part (1).
\end{proof}

\subsection{The action of Gal$(\C / \mQ )$} \label{par:7.4} \

We consider the Galois action on elliptic representations, generalizing the
Galois action on essentially square-integrable representations studied in
Section \ref{sec:5}.
We have to take into account that $I_P^G$ need not commute with the action of an 
arbitrary $\gamma \in \Gal (\C / \mQ)$. 
To this end, we consider the alternative square root of $\delta_P$. Recall from
\cite[\S 1.2.1]{Sil0} that
\[
\delta_P (l) = | \det \big( \text{Ad}(l) : \text{Lie}(U_P) \to \text{Lie}(U_P) \big) \big|_F .
\]
This shows in particular that $\delta_P$ takes values in $q_F^\ZZ \subset \mQ_{>0}$. 
Whenever $\delta_P (l)$ is a square
in $\mQ^\times$, it has a canonical root in $\mQ$, namely $\sqrt{\delta_P (l)} \in 
\mQ_{>0}$. When $\delta_P (l)$ is not a square in $\mQ^\times$, it has the roots
$\sqrt{\delta_P (l)} \in \RR_{>0}$ and $\sqrt[']{\delta_P (l)} \in \RR_{<0}$, which
are exchanged by $\Gal (\mQ (\sqrt{q_F}) / \mQ)$.
This gives rise to two roots of $\delta_P$, denoted $\sqrt{\delta_P}$ and 
$\sqrt[']{\delta_P}$. Let $v_p$ be the $p$-adic valuation on $\mQ$, then
\begin{equation}\label{eq:6.35}
\begin{aligned}
& \sqrt[']{\delta_P} = \sqrt{\delta_P} \chi_- , \\
& \chi_- (l) = (-1)^{v_p (\delta_P (l))} \qquad l \in L .
\end{aligned}
\end{equation}
Notice that $\chi_-$ is a quadratic unramified character of $L$, which could be 1. 

\begin{lemma}\label{lem:5.2}
\begin{enumerate}
\item $\chi_- |_L$ depends only on $L$, not on $P$.
\item $\chi_- |_L$ is fixed by $N_G (L)$.
\end{enumerate}
\end{lemma}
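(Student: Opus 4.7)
Both parts reduce to computing how the $p$-adic valuation of $\delta_P(l)$ changes under the two natural operations: varying $P$ (with $L$ fixed), and conjugating by $n\in N_G(L)$. The central identity will be a root-space decomposition
\[
\delta_P(l) \;=\; \prod_{\alpha \in \Phi(G,A_L)^+_P} |\alpha(l)|_F^{\,d_\alpha},
\]
where $\Phi(G,A_L)^+_P$ denotes the roots of $A_L$ occurring in $\mathrm{Lie}(U_P)$ and $d_\alpha$ is the dimension of the corresponding root space (including any $2\alpha$ contributions).

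For part (1), fix two parabolics $P,P'$ with common Levi $L$. The difference between $\Phi(G,A_L)^+_P$ and $\Phi(G,A_L)^+_{P'}$ is given by a subset $S$ of roots that gets negated; moreover $d_\alpha = d_{-\alpha}$. Therefore
\[
\delta_P(l)/\delta_{P'}(l) \;=\; \prod_{\alpha \in S} |\alpha(l)|_F^{\,2 d_\alpha},
\]
which is a square in $q_F^{\ZZ}$. Since $v_p(q_F)$ is an integer, the $p$-adic valuation of this ratio is even, so $(-1)^{v_p(\delta_P(l))} = (-1)^{v_p(\delta_{P'}(l))}$. This proves that $\chi_-|_L$ is independent of~$P$.

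For part (2), take $n\in N_G(L)$ and $l\in L$. We want $\chi_-(n^{-1}ln)=\chi_-(l)$. The parabolic $P'' := nPn^{-1}$ again has Levi $L$, and using $\mathrm{Lie}(U_{P''}) = \mathrm{Ad}(n)\,\mathrm{Lie}(U_P)$ together with $\mathrm{Ad}(l)\circ\mathrm{Ad}(n) = \mathrm{Ad}(n)\circ\mathrm{Ad}(n^{-1}ln)$, one gets the identity $\delta_{P''}(l) = \delta_P(n^{-1}ln)$. Hence
\[
\chi_-(n^{-1}ln) \;=\; (-1)^{v_p(\delta_P(n^{-1}ln))} \;=\; (-1)^{v_p(\delta_{P''}(l))} \;=\; \chi_-(l),
\]
where the last equality applies part (1) to the pair $(P,P'')$. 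This proves part~(2).

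The only real content is the root-space formula for $\delta_P$ and the parity count in part (1); part (2) is then a formal consequence of part (1) together with the standard conjugation identity for modulus characters. I do not anticipate a genuine obstacle beyond bookkeeping with root multiplicities; the essential point is simply that the two chambers defining $P$ and $P'$ differ by flipping signs on a set of roots, producing a \emph{square} in the ratio $\delta_P/\delta_{P'}$.
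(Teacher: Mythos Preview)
Your proof is correct, and part (2) is identical to the paper's argument. For part (1) you take a slightly different route: rather than factoring $\chi_-$ as a product $\prod_{\alpha \in \Phi(G,A_L)/\{\pm 1\}} \chi_-^\alpha$ of rank-one pieces (which is what the paper does, yielding an expression manifestly independent of $P$), you compute the ratio $\delta_P/\delta_{P'}$ directly and show it is a perfect square in $q_F^{\mZ}$. Both arguments rest on the same underlying fact---passing from $P$ to $P'$ negates a subset $S$ of roots and $d_\alpha=d_{-\alpha}$---so the difference is mainly packaging; the paper's version has the small bonus of an explicit product formula for $\chi_-$.

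One point to tighten: the expression $|\alpha(l)|_F$ makes literal sense only for $l\in A_L$, since $\alpha$ is a character of $A_L$, not of $L$. Knowing the ratio is a square on $A_L$ alone does not immediately force it to be a square on all of $L$, because an unramified character of $L$ is not determined by its restriction to $A_L$. The fix is easy: replace $|\alpha(l)|_F^{d_\alpha}$ by the character $l\mapsto |\det(\mathrm{Ad}(l)\,|\,\mathfrak g_\alpha)|_F$ of $L$; since $\mathfrak g_\alpha$ and $\mathfrak g_{-\alpha}$ are dual $L$-modules, one gets $\delta_P(l)/\delta_{P'}(l)=\prod_{\alpha\in S}|\det(\mathrm{Ad}(l)\,|\,\mathfrak g_\alpha)|_F^{\,2}$ for every $l\in L$, and your parity conclusion follows. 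The paper sidesteps this issue by phrasing everything in terms of the modulus characters $\delta_{P\cap L_\alpha}$, which are characters of $L$ from the outset.
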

\begin{proof}
(1) Let $L_\alpha$ be the Levi subgroup of $G$ generated by $L \cup U_\alpha \cup 
U_{-\alpha}$, where $U_\alpha$ denotes the root subgroup for all positive multiples of
$\alpha$. Multiplication provides an isomorphism    
\[
\prod\nolimits_{\alpha \in \Phi (G,Z(L))^+} (U_P \cap L_\alpha) \to U_P,
\]
for any ordering of the set of roots $\Phi (G,Z(L))^+$ \cite[Proposition 14.4]{Bor}. 
This implies that
\[
\delta_P (l) = \prod\nolimits_{\alpha \in \Phi (G,Z(L))^+} \delta_{P \cap L_\alpha}(l)
\qquad l \in L .
\]
Writing $\chi_-^\alpha$ for $\chi_-$ with respect to $L_\alpha \supset P \cap L_\alpha
= L U_\alpha \supset L$, we obtain
\begin{equation}\label{eq:5.2}
\chi_- |_L = \prod\nolimits_{\alpha \in \Phi (G,Z(L))^+} \chi_-^\alpha |_L .    
\end{equation}
As $\delta_{L U_{-\alpha}} = \delta_{L U_\alpha}^{-1} = \delta_{P \cap L_\alpha}^{-1}$
and $\chi_-^\alpha$ is quadratic, we can also interpret $\chi_-^\alpha$ as $\chi_-$ for
$L_\alpha \supset L U_{-\alpha} \supset L$. This enables us to rewrite \eqref{eq:5.2} as
\[
\chi_- |_L = \prod\nolimits_{\alpha \in \Phi (G,Z(L)) / \{\pm 1\}} \chi_-^\alpha |_L .    
\]
That expression does not depend on the choice of a parabolic subgroup with Levi
factor $L$.\\
(2) For $n \in N_G (L)$ and $l \in L$ we have $\delta_P (n^{-1} l n) = 
\delta_{n P n^{-1}}(l)$. Hence $\chi_- (n^{-1} l n) = \chi_- (l)$, where the second
$\chi_-$ is for $G \supset n P n^{-1} \supset L$. By part (1), $\chi_-$ coincides
with the original $\chi_-$.
\end{proof}

For $\gamma \in \Gal (\C / \mQ)$ we write $\epsilon (\gamma) = 0$
if $\gamma$ fixes $\sqrt{q_F}$, and $\epsilon (\gamma) = 1$ otherwise.
This definition is designed so that 
$\sqrt{\delta_P}^\gamma = \sqrt{\delta_P} \otimes \chi_-^{\epsilon (\gamma)}$.

For $\pi \in \Rep (\C L)$ and $\gamma \in \Gal (\C / \mQ)$ we have
\begin{equation}\label{eq:6.27}
\begin{aligned}
I_P^G (\pi)^\gamma & = \mathrm{ind}_P^G \big( \pi \otimes \sqrt{\delta_P} \big)^\gamma 
\cong\mathrm{ind}_P^G \big( \pi^\gamma \otimes \sqrt{\delta_P}^\gamma \big) \\
& = \mathrm{ind}_P^G \big( \pi^\gamma \otimes \sqrt{\delta_P} \otimes 
\chi_-^{\epsilon (\gamma)} \big) = 
I_P^G \big(\pi^\gamma \otimes \chi_-^{\epsilon (\gamma)} \big) .
\end{aligned}
\end{equation}

\begin{proposition}\label{prop:5.5}
Let $\delta \in \Irr (\C L)$ be a square-integrable representation (modulo centre) such 
that $R_{\delta,ell}$ is nonempty and central character $cc(\delta)$ is of finite order.  
Let $r \in R_{\delta,ell}$ and let $\gamma \in \Gal (\C / \mQ )$. Then 
\begin{enumerate}
\item $R_{\delta^\gamma \otimes \chi_-^{\epsilon (\gamma)}} = R_\delta\subset W_L$ and 
$R_{\delta^\gamma  \otimes \chi_-^{\epsilon (\gamma)},ell} = R_{\delta,ell}$.
\item Under identification  $I_P^G (\delta)^\gamma \cong I_P^G \big(\delta^\gamma \otimes \chi_-^{\epsilon (\gamma)} \big)$ of 
(\ref{eq:6.27}) and part (1), the projective representations  $(J_\delta )^\gamma$ and $J_{\delta^\gamma  \otimes \chi_-^{\epsilon (\gamma)}}$ are isomorphic.  
\item $I_P^G (\delta)_r^\gamma$ is isomorphic to the elliptic representation 
$I_P^G \big( \delta^\gamma  \otimes \chi_-^{\epsilon (\gamma)} \big)_r$.
\end{enumerate}
\end{proposition}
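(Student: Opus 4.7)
The plan is to exploit the algebraic character of all the ingredients involved, namely the intertwining operators, Harish-Chandra's $\mu$-function, the R-group, and the associated virtual characters, so that Galois equivariance can be verified once the explicit twist in \eqref{eq:6.27} by $\chi_-^{\epsilon(\gamma)}$ is correctly accounted for.

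For part (1), I first handle the identification of stabilizers $W_{L,\delta} = W_{L,\delta^\gamma \otimes \chi_-^{\epsilon(\gamma)}}$ in $W_L$: if $w \cdot \delta \cong \delta$, then $w \cdot \delta^\gamma \cong \delta^\gamma$, and since $\chi_-|_L$ is $N_G(L)$-invariant by Lemma \ref{lem:5.2}.(2), the twist by $\chi_-^{\epsilon(\gamma)}$ preserves this stability; applying $\gamma^{-1}$ gives the reverse inclusion. Turning to the root system $\Phi_\delta$, which consists of the reduced roots $\alpha$ for which $\mu_{L_\alpha,L}$ vanishes at $\delta$: using the algebraic version of the intertwining operator construction in \cite[Th\'eor\`eme IV.1.1]{Wal}, the scalar $j_{L_\alpha,L}$ from \eqref{eq:6.36} (and hence $\mu_{L_\alpha,L}$) is Galois-equivariant in $\delta$; combined with the compatibility \eqref{eq:6.27}, this yields $\Phi_\delta = \Phi_{\delta^\gamma \otimes \chi_-^{\epsilon(\gamma)}}$. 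Since the positive system $\Phi_\delta^+$ is fixed by the choice of $P$, the definition of $R_\delta$ as the stabilizer of $\Phi_\delta^+$ in $W_{L,\delta}$ gives $R_\delta = R_{\delta^\gamma \otimes \chi_-^{\epsilon(\gamma)}}$; ellipticity being a condition on elements of $W_L$ alone, see \eqref{eq:5.4}, finishes the identification of the elliptic parts.

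For part (2), I apply the algebraic construction of Lemma \ref{lem:indep}.(3) to a suitable $\OQ$-form. Given any normalization $J_\delta(r)$ of finite order on $I_P^G(\delta)$, the operators $J_\delta(r)^\gamma$ are defined on $I_P^G(\delta)^\gamma$, and via the isomorphism \eqref{eq:6.27} they transport to intertwining operators on $I_P^G(\delta^\gamma \otimes \chi_-^{\epsilon(\gamma)})$ satisfying the same algebraic relations and orders. Taking these as the normalization $J_{\delta^\gamma \otimes \chi_-^{\epsilon(\gamma)}}(r) := J_\delta(r)^\gamma$ is permissible in the sense of Lemma \ref{lem:indep}, which gives (2). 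Part (3) then follows by applying $\gamma$ to the defining formula \eqref{eq:5.7}: since each trace $\mathrm{tr}(\kappa(r))$ is a sum of roots of unity and hence lies in $\OQ$, and since the decomposition \eqref{eq:7.9} for $\delta$ Galois-conjugates to the analogous decomposition for $\delta^\gamma \otimes \chi_-^{\epsilon(\gamma)}$ under the normalization of (2), we obtain $I_P^G(\delta)_r^\gamma \cong I_P^G(\delta^\gamma \otimes \chi_-^{\epsilon(\gamma)})_r$.

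The main obstacle is verifying the Galois equivariance of Harish-Chandra's $\mu$-function, or more precisely that zeros of $\mu_{L_\alpha,L}$ along the orbit $X_\nr(L)\delta$ transform predictably under Galois. This reduces to checking that Waldspurger's algebraic construction of the unnormalized intertwining operators \eqref{eq:6.1} produces scalars $j_{L_\alpha,L}$ and constants $\gamma(L_\alpha|L)$ that are compatible with Galois action up to the $\chi_-^{\epsilon(\gamma)}$-twist; the appearance of $\chi_-$ reflects the fact that $\sqrt{\delta_P}$ is only defined over $\mQ(\sqrt{q_F})$, whereas all other ingredients in the construction of the R-group can be arranged over $\OQ$.
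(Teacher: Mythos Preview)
Your argument for part (1) matches the paper's and is correct. Part (3), granted (2), is also right in outline, though note that $\mathrm{tr}(\kappa(r)) \in \OQ$ is not the operative fact here, since $\gamma \in \Gal(\C/\mQ)$ need not fix $\OQ$; what is actually used is that $\gamma(\mathrm{tr}(\kappa(r))) = \mathrm{tr}(\kappa^\gamma(r))$ and that $\kappa \mapsto \kappa^\gamma$ is a bijection on $\Irr(\C[R_\delta,\natural_\delta])$.

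The real gap is in part (2). The intertwining operator $J_{\delta'}(r)$, with $\delta' = \delta^\gamma \otimes \chi_-^{\epsilon(\gamma)}$, is already defined up to scalar by the construction of \cite{ABPS}; you cannot simply declare $J_{\delta'}(r) := J_\delta(r)^\gamma$ without verifying that this agrees with the existing definition. Your appeal to Lemma \ref{lem:indep}.(3) and ``a suitable $\OQ$-form'' does not help: that lemma requires $\delta$ to be defined over $\OQ$, which fails in general for $\gamma \in \Gal(\C/\mQ)$, and in any case Lemma \ref{lem:indep} only concerns normalizing an already-given $J_{\delta'}(r)$ to have finite order --- it does not say that an arbitrary finite-order $G$-automorphism of $I_P^G(\delta')$ qualifies as $J_{\delta'}(r)$. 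The paper closes this gap by observing that $J_\delta(r)$ is the specialization of an algebraic family $J_{\delta \otimes \chi}(r) \in \Hom_G\big(I_P^G(\delta \otimes \chi), I_P^G(r \cdot (\delta \otimes \chi))\big)$ over $\chi \in X_\nr(L)$; for generic $\chi$ this Hom-space is one-dimensional by irreducibility of $I_P^G(\delta \otimes \chi)$, so the family is unique up to a scalar function. Applying $\gamma$ and the identification \eqref{eq:6.27} produces an algebraic family with the same uniqueness property for $\delta'$, which forces $J_\delta(r)^\gamma$ and $J_{\delta'}(r)$ to coincide up to scalar.
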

\begin{proof}
(1) By Theorem \ref{Cl}, $\delta^\gamma \in \Irr (\C L)$ is again square-integrable (modulo centre), whose central character is of finite order.  By Lemma \ref{lem:5.2}.(2), $W_{L, \delta^\gamma \otimes 
\chi_-^{\epsilon (\gamma)}}$ equals $W_{L,\delta^\gamma}$. The group 
$W_{L,\delta^\gamma}$ equals $W_{L,\delta}$ because the actions of $N_G (L)$ and
$\Gal (\C / \mQ)$ on $\Irr (L)$ commute. It follows easily from \eqref{eq:6.27}, algebraic description of intertwining operators in \cite[proof of Th\'eor\`eme IV.1.1]{Wal}, 
and the definition of $j_{L_\alpha,L}$ that
\begin{equation}\label{eq:5.8}
\gamma (j_{L_\alpha,L} (\delta)) = j_{L\alpha,L} \big( 
\delta^\gamma \otimes \chi_-^{\epsilon (\gamma)} \big) .
\end{equation}
Therefore the set $\Phi_\delta$ of roots $\alpha$ such that $\mu_{L_\alpha,L} (\delta) = 0$ 
does not change upon applying $\gamma$ to $I_P^G (\delta)$. Now \eqref{eq:5.5} shows 
that the groups $W \big( \Phi_{\delta^\gamma \otimes \chi_-^{\epsilon (\gamma)}} \big)$ 
and $R_{\delta^\gamma \otimes \chi_-^{\epsilon (\gamma)}}$ are the same for 
$\delta^\gamma \otimes \chi_-^{\epsilon (\gamma)}$ and for $\delta$. 
By \eqref{eq:5.4}, the same holds for the subsets of elliptic elements.

(2) For every $r \in R_\delta$, the intertwining operator $J_\delta (r) \in
\Aut_G (I_P^G (\delta))$ (defined up to a scalar) is characterized by the fact that it 
is a member of an algebraic family
\[
J_{\delta \otimes \chi}(r) \in \Hom_G \big( I_P^G (\delta \otimes \chi),
I_P^G (r \cdot (\delta \otimes \chi)) \big) \qquad \chi \in X_\nr (L) .
\]
Indeed, for generic $\chi$, $I_P^G (\delta \otimes \chi)$ is irreducible and
$J_{\delta \otimes \chi}(r)$ is unique up to a scalar. For any $\chi \in X_\nr (L)$,
the operator $J_{\delta \otimes \chi}(r)^{\gamma}$ is an element of 
\begin{multline*}
\Hom_G \big( I_P^G (\delta \otimes \chi)^\gamma, I_P^G (r \cdot 
(\delta \otimes \chi ))^\gamma \big) = \\
\Hom_G \big( I_P^G (\delta^\gamma \otimes \chi^\gamma \otimes \chi_-^{\epsilon (\gamma)}),
I_P^G (r \cdot (\delta^\gamma \otimes \chi^\gamma) \otimes \chi_-^{\epsilon (\gamma)}) \big). 
\end{multline*}
As $r \cdot \chi_- = \chi_-$ (Lemma \ref{lem:5.2}.(2)), a family of such operators  
determines $J_{\delta^\gamma \otimes \chi_-^{\epsilon (\gamma)}}(r)$.  
Therefore operators $J_\delta(r)^{\gamma}$ and $J_{\delta^\gamma \otimes \chi_-^{\epsilon (\gamma)}}(r)$ 
are equal up to a scalar.

(3) As in paragraph \ref{par:7.2}, we choose a central extension (\ref{extension}) and a lift $\ti J_{\delta}$ of a projective representation 
$\tau\circ J_{\delta}$. Then  $(\ti J_{\delta})^{\gamma}$ is a lift of $\tau\circ (J_{\delta})^{\gamma}$. Therefore, by part (2), 
$(\ti J_\delta )^\gamma$ can be viewed as a lift of 
$\tau\circ J_{\delta^\gamma \otimes \chi_-^{\epsilon (\gamma)}}$. Then, for every $\kappa \in \Pi (\delta,\zeta)$, we have $\kappa^\gamma \in \Pi (\delta^{\gamma},\zeta^{\gamma})\cong\Pi (\delta^\gamma \otimes \chi_-^{\epsilon (\gamma)},\zeta^\gamma)$ and   
\begin{equation}\label{eq:5.1}
I_P^G (\delta )_\kappa^\gamma\cong 
I_P^G (\delta^\gamma \otimes \chi_-^{\epsilon (\gamma)})_{\kappa^\gamma} .
\end{equation}
Upon applying $\gamma$ and using \eqref{eq:5.1}, the decomposition \eqref{eq:5.6} becomes
\begin{equation}\label{eq:5.10}
\begin{aligned}
I_P^G \big( \delta^\gamma \otimes \chi_-^{\epsilon (\gamma)} \big) & \cong 
I_P^G (\delta )^\gamma \cong \bigoplus\nolimits_{\kappa \in \Pi (\delta,\zeta)}
\kappa^\gamma \otimes I_P^G (\delta)_\kappa^\gamma \\
& = \bigoplus\nolimits_{\kappa^\gamma \in \Pi (\delta^\gamma \otimes \chi_-^{\epsilon (\gamma)},
\zeta^\gamma)} \kappa^\gamma \otimes 
I_P^G \big( \delta^\gamma \otimes \chi_-^{\epsilon (\gamma)} \big)_{\kappa^\gamma} .
\end{aligned}
\end{equation}
It follows that \eqref{eq:5.7} transforms into equality 
\[
I_P^G (\delta)_r^\gamma = \sum_{\kappa^\gamma \in \Pi (\delta^\gamma \otimes 
\chi_-^{\epsilon (\gamma)},\zeta^\gamma)} \hspace{-5mm} \text{tr}(\kappa^\gamma (\tilde r)) 
\, I_P^G (\delta^\gamma \otimes \chi_-^{\epsilon (\gamma)})_{\kappa^\gamma} = 
I_P^G (\delta^\gamma \otimes \chi_-^{\epsilon (\gamma)} )_r . \qedhere
\]
\end{proof}

We are ready to conclude the proof of Theorem \ref{thm:1.3}.

\begin{corollary}\label{thm:5.7}
\begin{enumerate}
\item For every elliptic $G$-representation $\pi$ and every $\Gg\in\Gal (\C / \mQ)$, the Galois conjugate 
$\pi^{\Gg}$ is elliptic. 
\item The spaces  $\mcK_G (ar)$ and  $\mcK'_G (ar) = \mcK_{\OQ G} (ar)$ are stable under $\Gal (\C / \mQ)$.
\end{enumerate}
\end{corollary}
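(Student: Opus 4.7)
The plan is to deduce the corollary as an essentially formal consequence of Proposition \ref{prop:5.5}, together with the classification of elliptic representations noted after Lemma \ref{lem:5.1}.

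For part (1), I would begin by recalling that every elliptic $G$-representation $\pi$ is, up to scaling by $\OQ^\times$, of the form $I_P^G(\delta)_r$ for some Levi subgroup $L \subset G$, some essentially square-integrable $\delta \in \Irr(\C L)$, and some $r \in R_{\delta,\el}$ (necessarily $\natural_\delta$-good, though that plays no role here). Fix $\gamma \in \Gal(\C/\mQ)$. By Theorem \ref{Cl}, $\delta^\gamma \in \Irr(\C L)$ is again essentially square-integrable; twisting by the unramified quadratic character $\chi_-^{\epsilon(\gamma)}$ preserves essential square-integrability, so $\delta' := \delta^\gamma \otimes \chi_-^{\epsilon(\gamma)}$ is essentially square-integrable as well. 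Proposition \ref{prop:5.5}.(1) identifies $R_{\delta',\el}$ with $R_{\delta,\el}$ as subsets of $W_L$, so $r \in R_{\delta',\el}$. Finally, Proposition \ref{prop:5.5}.(3) gives $\pi^\gamma = I_P^G(\delta)_r^\gamma \cong I_P^G(\delta')_r$, which is exactly of the form in Definition \ref{def:ell}. Hence $\pi^\gamma$ is elliptic.

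For part (2), stability of $\mcK_G(ar)$ is immediate from part (1), since $\mcK_G(ar)$ is by definition the $\OQ$-linear span of elliptic virtual representations $I_P^G(\delta)_r$, and the Galois action is $\OQ$-linear (Galois acts trivially on $\OQ$, so coefficients are preserved; only the basis vectors are permuted up to $\OQ^\times$). For $\mcK'_G(ar) = \mcK_G(ar) \cap \mcK'_G$, it suffices to verify that $\mcK'_G$ is Galois-stable. But $\mcR'_G$ is defined by the condition that central characters of irreducible subquotients take values in $\OQ^\times$, and this condition is manifestly preserved by $\Gal(\C/\mQ)$, since $\gamma$ stabilizes $\OQ^\times$ setwise. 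Therefore the intersection $\mcK'_G(ar)$ is stable as well.

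There is essentially no obstacle here: all the substantive work was already done in Theorem \ref{Cl} (Galois-stability of essentially square-integrable representations) and in Proposition \ref{prop:5.5} (the compatibility of the elliptic construction with Galois action, including the subtle correction by $\chi_-^{\epsilon(\gamma)}$ coming from the two square roots of $\delta_P$). The only care required is to keep track of the $\chi_-^{\epsilon(\gamma)}$ twist and note that it does not affect either essential square-integrability or the group $R_\delta$; both points are already handled in Proposition \ref{prop:5.5}.
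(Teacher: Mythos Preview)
Your proof is correct and follows essentially the same approach as the paper: both deduce part (1) directly from Proposition \ref{prop:5.5}.(3) and then obtain part (2) from part (1) since the spaces in question are spanned by elliptic representations. Your version is simply more detailed, in particular making explicit the (easy) verification that $\mcK'_G$ is $\Gal(\C/\mQ)$-stable, which the paper leaves implicit.
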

\begin{proof}
(1) By Lemma~\ref{lem:5.3}.(1), every elliptic $G$-representation $\pi$ is of the form 
$I_P^G (\delta )_r\otimes\chi$, where $I_P^G (\delta )_r$ is a tempered elliptic $G$-representation, corresponding to a square-integrable representation $\delta$, whose central character is of finite order, and $\chi\in X_{\nr}(G)$ is a $\OQ^{\times}$-valued unramified character of $G$.  Since by Proposition \ref{prop:5.5}.(3), $\Gal (\C / \mQ)$ maps the $I_P^G (\delta )_r$'s to elliptic representations, the assertion follows. 

(2) This is a direct consequence of part (1).
\end{proof}

\vspace{3mm}
\noindent\textbf{Acknowledgements.}\\
We thank Laurent Clozel for an explanation of his proof of Theorem \ref{Cl} 
and Eric Opdam for helpful explanations about formal degrees.  We thank 
Jean-Loup Waldspurger and Volker Heiermann for correcting imprecisions and providing references. We thank Jean-Francois Dat for pointing out that some results in an 
earlier version of this paper had already been proven by Vign\'eras in \cite{Vig}.\\
The research of D. Kazhdan was partially supported by ERC grant 101142781.
The research of Y. Varshavsky was partially supported by the ISF grant 2091/21.

\end{document}